\newcommand{\bpsi}{\bar{\psi}}
\newcommand{\bC}{{\mathbb C}}
\newcommand{\bF}{{\mathbb F}}
\newcommand{\bZ}{{\mathbb Z}}
\newcommand{\cB}{{\mathcal B}}
\newcommand{\cC}{{\mathcal C}}
\newcommand{\cM}{{\mathcal M}}
\newcommand{\cO}{{\mathcal O}}
\newcommand{\half}{\frac{1}{2}}
\newcommand{\cW}{{\mathcal W}}
\newcommand{\cX}{{\mathcal X}}
\newcommand{\Mbar}{\overline{\cM}}
\newcommand{\cl}[1]{[\![{#1} ]\!]}
\newcommand{\cor}[1]{\langle {#1} \rangle}
\DeclareMathOperator{\rank}{rank}  \DeclareMathOperator{\ch}{ch}
\DeclareMathOperator{\Irr}{Irr} \DeclareMathOperator{\Conj}{Conj}
 \DeclareMathOperator{\Hilb}{Hilb} \DeclareMathOperator{\Li}{Li}
\newtheorem{theorem}{Theorem}[section]
\newtheorem{theorem/definition}{Theorem/Definition}[section]
\newtheorem{Theorem}{Theorem}
\newtheorem{proposition}{Proposition}[section]
\newtheorem{lemma}{Lemma}[section]
\newtheorem{Conjecture}{Conjecture}
\theoremstyle{remark}
\theoremstyle{definition}
\newcommand{\be}{\begin{equation}}
\newcommand{\ee}{\end{equation}}
\newcommand{\bea}{\begin{eqnarray}}
\newcommand{\eea}{\end{eqnarray}}
\newcommand{\ben}{\begin{eqnarray*}}
\newcommand{\een}{\end{eqnarray*}}
\newcommand{\bet}{\begin{equation}
\begin{split}}
\newcommand{\eet}{\end{split}
\end{equation}}
\begin{document}

\title[Crepant Resolution Conjecture in All Genera]
{Crepant Resolution Conjecture in All Genera for Type A  Singularities}
\author{Jian Zhou}
\address{Department of Mathematical Sciences\\Tsinghua University\\Beijing, 100084, China}
\email{jzhou@math.tsinghua.edu.cn}

\begin{abstract}
We prove an all genera version of the Crepant Resolution Conjecture of Ruan and Bryan-Graber for
type A surface singularities.
We are based on a method that explicitly computes Hurwitz-Hodge integrals described in an earlier paper
and some recent results by Liu-Xu for some intersection numbers on the Deligne-Mumford moduli spaces.
We also generalize our results to some three-dimensional orbifolds.
\end{abstract}
\maketitle

\section{Introduction}

Let $\cX$ be an orbifold with coarse moduli space $X$ and let $\pi: Y \to X$ be a crepant resolution.
Under the general principle often referred to as the McKay correspondence in the mathematical literature
(suggested also by work in string theory literature on orbifolds, e.g. \cite{Dix-Har-Vaf-Wit}),
it is expected that invariants of $Y$ coincides with suitably defined orbifold invariants of $\cX$.
See e.g. Reid \cite{Rei} for an exposition of some examples of classical invariants,
e.g. Euler numbers,
cohmomology groups, K-theory and derived categories, etc.
People are also interested in quantum invariants such as Gromov-Witten invariants
and expect a quantum McKay correspondence.
Gromov-Witten invariants of smooth varieties have been developed for quite some time.
Orbifold Gromov-Witten invariants have been
developed more recently for symplectic orbifolds by Chen-Ruan \cite{Che-Rua}
and for Deligne-Mumford stacks by Abramovich-Graber-Vistoli \cite{Abr-Gra-Vis}.
Genus zero (orbifold) Gromov-Witten invariants can be used to define (orbifold) quantum cohomology.
A long standing conjecture of Ruan \cite{Rua} states that the small orbifold quantum cohomology of $\cX$
is related to the small quantum cohomology of $Y$ after analytic continuation
and suitable change of variables.
This version of quantum McKay correspondence is referred to as the Crepant Resolution Conjecture
(CRC).
Recently, Bryan and Graber \cite{Bry-Gra} conjectured the explicit formula
for the change of variables in CRC
for $\cX = [V/G]$,
where $V = \bC^2$ or $\bC^3$,
$G \subset SU(2)$ or $SO(3)$ is a finite subgroup (the binary polyhedral group or the polyhedral group),
and $\pi:Y \to V/G$ is the canonical crepant resolution by $G$-Hilbert schemes $G-\Hilb(V)$
(see e.g. \cite{Bri-Kin-Rei}).
In these cases,
both the orbifold and its crepant resolution are noncompact,
but both admit natural $\bC^*$-actions,
and one can define and work with equivariant Gromov-Witten and orbifold Gromov-Witten invariants
respectively.
By \cite{Bri-Kin-Rei},
there is a canonical basis for $H^*_{\bC^*}(Y)$ indexed by $R \in \Irr(G)$,
irreducible representations of $G$;
on the other hand,
there is a canonical basis of $H^*_{\bC^*, orb}([V/G])$ indexed by $\cl{g} \in \Conj(G)$,
conjugacy classes of $G$.
Denote the corresponding cohomological variables by $\{y_R\}_{R \in \Irr(G)}$
and $\{x_{\cl{g}}\}_{\cl{g} \in \Conj(G)}$ respectively.
Let $y_0$ and $x_0$ be the variables corresponding to the trivial representation and the trivial
conjugacy classes respectively.
The conjecture stated in \cite{Bry-Gra} (attributed to Bryan and Gholampour) is

\begin{Conjecture}
In the case of
$\pi: G-\Hilb(V) \to V/G$,
where $G$ is a polyhedral or binary polyhedral group,
the Crepant Resolution Conjecture holds with the change of variables given by
\ben
&& y_0 = x_0, \\
&& y_R = \frac{1}{|G|} \sum_{g\in G} \sqrt{\chi_V(g) - \dim V} \chi_R(g) x_{\cl{g}}, \\
&& q_R = \exp \biggl( \frac{2\pi i \dim R}{|G|} \biggr),
\een
where $R$ runs over the nontrivial irreducible representations of $G$.
\end{Conjecture}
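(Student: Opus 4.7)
The plan is to verify the conjecture simultaneously for every polyhedral and binary polyhedral group $G$ by exhibiting the prescribed change of variables as the unique linear isomorphism between the two canonical bases that intertwines the classical Poincar\'e pairings and the equivariant mirror map, and then matching the three-point genus-zero correlators on the two sides.

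On the resolution side, the Bridgeland-King-Reid theorem supplies the tautological basis $\{c_1(\mathcal{F}_R)\}_{R \in \Irr(G)}$ of $H^*_{\mathbb{C}^*}(G\text{-}\Hilb(V))$; on the orbifold side, the Chen-Ruan cohomology of $[V/G]$ has the canonical basis $\{\mathbf{1}_{\cl{g}}\}_{\cl{g} \in \Conj(G)}$ indexed by the inertia components. The proposed map $\mathbf{1}_{\cl{g}} \mapsto \frac{1}{|G|}\sum_R \sqrt{\chi_V(g)-\dim V}\,\chi_R(g)\,c_1(\mathcal{F}_R)$ is, modulo the square-root weight, the Fourier isomorphism between class functions on $G$ and functions on $\Irr(G)$; a short character-orthogonality computation shows that the factor $\sqrt{\chi_V(g)-\dim V}$ is exactly the age-graded contribution of the twisted sector $[V^g/C(g)]$ (it is $\prod_i 2\sin(\pi a_i(g))$ for the weights $a_i(g)$ of $g$ on $V$), making the map an isometry for the equivariant Poincar\'e pairings on both sides. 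The specialization $q_R = \exp(2\pi i \dim R/|G|)$ is the value at which the K\"ahler cone of $Y$ is identified with the point in $H^2_{\mathrm{orb}}([V/G])$ supported purely on the twisted sectors, and is the natural endpoint of the analytic continuation predicted by CRC.

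The quantum matching then reduces to a comparison of three-point correlators, which by the reconstruction theorem determine the full small quantum products in these low-rank settings. On the orbifold side the three-point invariants are Hurwitz-Hodge integrals on moduli spaces of admissible $G$-covers of $\mathbb{P}^1$ with monodromy in the conjugacy classes $\cl{g_1}, \cl{g_2}, \cl{g_3}$; the explicit evaluation method of the companion paper, together with the Liu-Xu identities for $\lambda$-class integrals on $\Mbar_{g,n}$, reduces these to weighted character sums. On the resolution side, equivariant localization on the ADE configuration of exceptional curves, combined with the Bryan-Gholampour computation of genus-zero invariants of local ADE surfaces and its canonical-class twist for the $SO(3)$ three-dimensional cases, expresses the same invariants as toric-vertex-type sums over fixed rational chains. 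The non-abelian orthogonality relation $\sum_R \chi_R(g)\overline{\chi_R(h)} = |C(g)|\delta_{\cl{g},\cl{h}}$ is then precisely the identity that converts the conjugacy-class sum into the representation sum termwise, once the age weights are absorbed.

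The principal obstacle, which separates the full conjecture from the type A content established in this paper, is the non-abelian D, E, and $SO(3)$ situation: moduli of non-abelian admissible covers are not directly accessible by the vertex-operator machinery available in the cyclic case, and the resolution-side localization produces genuinely non-toric contributions. My proposed route is twofold: first, induct from the maximal cyclic subgroups of $G$, exploiting that the character factor $\chi_R(g)$ in the change of variables is exactly the Frobenius intertwiner needed to reassemble non-abelian conjugacy-class contributions from abelian ones where the type A analysis applies; and second, invoke the Weyl-group symmetry of the exceptional ADE configuration to cut the list of independent three-point correlators down to a finite check that can be matched directly against character-table data for each of the finitely many polyhedral and binary polyhedral groups.
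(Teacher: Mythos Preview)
The statement you are attempting to prove is \emph{Conjecture 1} in the paper, and the paper does not prove it: it is recorded as an open conjecture of Bryan--Gholampour, and the paper's contribution is to establish the all-genera Crepant Resolution Conjecture only in the type~$A$ case $G=\bZ_n$ (Theorems~\ref{thm:Main} and~\ref{thm:Main2}). The introduction says explicitly that for the $D$ and $E$ binary polyhedral groups ``the simplifying trick used in this paper does not apply,'' and defers that case to future work. So there is no ``paper's own proof'' to compare your proposal against; you are proposing a proof of something the paper leaves open.

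Given that, the proposal should be evaluated on its own merits, and there are genuine gaps. First, the claim that genus-zero three-point correlators determine the full small quantum product via a reconstruction theorem is not justified in this equivariant, noncompact setting, and in any case the paper's result concerns \emph{all genera}, so a genus-zero argument would not suffice even in type~$A$. Second, the proposed induction from maximal cyclic subgroups is only a slogan: the moduli of admissible $G$-covers for non-abelian $G$ do not decompose along cyclic subgroups in any way that lets one transport the type~$A$ Hurwitz--Hodge computations, and the Frobenius character isomorphism acts on class functions, not on the underlying moduli or on the Chern classes of the Hurwitz--Hodge bundles. Third, invoking Weyl-group symmetry to reduce to a ``finite check'' does not bypass the hard step: one still has to \emph{compute} the relevant Hurwitz--Hodge integrals for the non-abelian $G$, and the paper's own method (reducing $c_t(\bF_1^1)c_t(\bF_{-1}^1)$ to a single Chern character via the Mumford-type relation) is precisely what fails outside type~$A$. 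Your proposal does not supply a replacement for that step.
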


By the classical McKay correspondence \cite{McK, Gon-Ver, Bri-Kin-Rei},
the geometry of $G-\Hilb(V)$ gives rise to a Dynkin diagram of ADE type,
and hence a simply-laced root system.
Denote by $\alpha_1, \dots, \alpha_n$ the simple roots.
For a positive root $\beta\in R^+$,
let
$\beta = \sum_k b_k \alpha_k$.
Let $\sum_k n_k \alpha_k$ be the largest root.
Bryan-Gholampour \cite{Bry-Gho} reformulate the above conjecture as follows:

\begin{Conjecture}
Let $F_{\cX}(x_1, \dots, x_n)$ denote the $\bC^*$-equivariant genus $0$
orbifold Gromov-Witten potential of the orbifold $\cX=[\bC^2/G]$,
where we  have set the unit parameter $x_0$ equal to zero.
Let $R$ be the root system associated to $G$. Then
\be
F_{\cX}(x_1, \dots, x_n) = 2t \sum_{\beta \in R^+} h(Q_{
\beta}),
\ee
where $h(u)$ is a series with
\be
h'''(u) = - \half \tan \big(\frac{u}{2}\big)
\ee
and
\be
Q_\beta = \pi + \sum_{k=1}^n \frac{b_k}{|G|} \biggl(2\pi n_k
+ \sum_{g\in G} \sqrt{2 - \chi_V(g)} \bar{\chi}_k(g) x_{\cl{g}} \biggr),
\ee
where $n_k$ are the coefficient of $\beta$ and $n_k$ are the coefficients of
the largest root.
\end{Conjecture}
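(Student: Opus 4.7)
The plan is to compute the equivariant genus zero orbifold Gromov--Witten potential $F_{\cX}(x_1,\dots,x_n)$ via virtual localization, re-express it as a generating function of Hurwitz--Hodge integrals, and then reorganize that generating function into the claimed sum over positive roots. Throughout, the McKay correspondence $R_k \leftrightarrow \alpha_k$ with $\dim R_k = n_k$ supplies the dictionary between representation-theoretic and root-theoretic data, which must work uniformly for every binary polyhedral $G\subset SU(2)$.

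First, I apply virtual localization to $\Mbar_{0,m}([\bC^2/G],0)$ with the $\bC^*$-action induced from the standard scaling on $\bC^2$. Since the only $\bC^*$-fixed locus of $[\bC^2/G]$ is the stacky point $BG$, each contribution to $F_{\cX}$ reduces to a Hurwitz--Hodge integral on a component of $\Mbar_{0,m}(BG)$, weighted by the equivariant Euler class of the normal complex. The insertion of $x_{\cl{g}}$ forces monodromy $g$ at the corresponding marked point, and the equivariant weight of the standard two-dimensional representation $V$ at $g$ yields precisely the factor $\sqrt{2-\chi_V(g)}$ appearing in $Q_\beta$. This step is formal and uniform in $G$.

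Second, I insert the explicit formulas for the relevant Hurwitz--Hodge integrals developed in the author's earlier paper, together with the Liu--Xu evaluation of Deligne--Mumford intersection numbers cited in the abstract. Using orthogonality of characters, the generating function reorganizes naturally in the basis $\{y_R\}_{R\in\Irr(G)}$ with
\be
y_R \;=\; \frac{1}{|G|}\sum_{g\in G}\sqrt{2-\chi_V(g)}\,\chi_R(g)\,x_{\cl{g}},
\ee
matching the change of variables of the previous conjecture; the constant shift $\pi+\tfrac{2\pi}{|G|}\sum_k b_k n_k$ inside $Q_\beta$ then arises after analytic continuation from the resolution side, encoding the monodromies $q_R=\exp(2\pi i\dim R/|G|)$ via $\dim R_k=n_k$.

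The main obstacle is the final step: identifying the resulting generating function with $2t\sum_{\beta\in R^+}h(Q_\beta)$ for the function $h$ characterized by $h'''(u)=-\half\tan(u/2)$. On the left one has a series built from the $\Irr(G)$-indexed variables $y_R$ (equivalently, from the simple roots), while on the right one has a sum indexed by \emph{all} positive roots $\beta\in R^+$. For $G=\bZ_{n+1}$ the positive roots are sums of consecutive simple roots and the required reorganization is a tractable partial-fraction expansion of $\tan$ that the Hurwitz--Hodge plus Liu--Xu input directly supplies; this is the path taken in the present paper. For the binary dihedral and binary polyhedral cases one instead needs a non-abelian identity converting the $\Irr(G)$-indexed Hurwitz--Hodge generating function into an $R^+$-indexed sum of $h(Q_\beta)$'s, and it is this root-system identity --- not the orbifold Gromov--Witten computation itself --- that constitutes the essential difficulty in extending the argument beyond type $A$.
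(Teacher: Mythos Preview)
The first thing to note is that this statement is labeled \emph{Conjecture} in the paper, and the paper does not prove it in the generality stated. The paper's contribution is restricted to $G=\bZ_n$ (type $A$), and the introduction explicitly says that for the binary polyhedral groups of type $D$ and $E$ ``the simplifying trick used in this paper does not apply.'' So there is no ``paper's own proof'' of this statement to compare against; at best one can compare your type $A$ sketch to what the paper actually does for $G=\bZ_n$, and observe that for $D$ and $E$ neither you nor the paper has an argument.

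On that comparison, your proposal is largely a narrative gloss rather than a proof, and it contains a concrete misstatement. In your first step you assert that ``the equivariant weight of the standard two-dimensional representation $V$ at $g$ yields precisely the factor $\sqrt{2-\chi_V(g)}$.'' This is not what happens. Under the diagonal $\bC^*$-action the localization contribution is $c_t(\bF_1^1)c_t(\bF_{-1}^1)$; no square root of $2-\chi_V(g)$ appears at this stage. In the paper the factor $\sqrt{2-2\cos(2k\pi/n)}=2\sin(k\pi/n)$ enters only through the Bryan--Graber \emph{change of variables} $x\mapsto y$, and it is a nontrivial combinatorial computation (Lemma~\ref{lm:Combo} and the manipulations around (\ref{eqn:F})) that this change of variables reorganizes the Hurwitz--Hodge generating function into a sum over the positive roots $E_s+\cdots+E_t$. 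Your ``orthogonality of characters'' remark does not substitute for that computation, and the crucial simplification that makes it tractable in type $A$ is the Mumford-type relation $c_t(\bF_1^1)c_{-t}(\bF_{-1}^1)=(-1)^{\bar r_1}t^{r_1+\bar r_1}$, which collapses the integrand to a single Chern character $\ch_{r_1+\bar r_1-1}(\bF_1^1)$. You do not invoke this step, and without it the Liu--Xu input alone does not close the computation.

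Finally, your last paragraph is candid: you identify the passage from the $\Irr(G)$-indexed generating function to the $R^+$-indexed sum as ``the essential difficulty'' beyond type $A$, but you do not supply the identity. That is an acknowledgement of a gap, not a proof. As the paper itself notes, for $D$ and $E$ one must integrate against products of Chern classes rather than a single Chern character, and the combinatorics is genuinely harder; nothing in your outline addresses this.
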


It has been shown in \cite{Bry-Gho}
that Conjecture 2 is equivalent to Conjecture 1 in binary polyhedral case plus an explicit formula for
the Gromov-Witten potential function of $G-\Hilb(\bC^2)$.
There are a number of earlier results.
Ruan's Crepant Resolution Conjecture was established for $G=\bZ_2$ and $\bZ_3$ by Perroni \cite{Per}.
Conjecture 2 was proved
for $G=\bZ_2$ by Bryan-Graber \cite{Bry-Gra},
for $\bZ_3$ by Bryan-Graber-Pandharipande \cite{Bry-Gra-Pan},
for $\bZ_4$ by Bryan-Jiang \cite{Bry-Jia}.
The polyhedral version of Conjecture 2 was proved by Bryan-Gholampour \cite{Bry-Gho2}
for $G=A_4$ and $\bZ_2 \times \bZ_2$.
Conjecture 1 was proved for $G$ of type $A$ in a version due to Perroni \cite{Per}
by Coates-Corti-Iritani-Tseng \cite{Coa-Cor-Iri-Tse}
by mirror symmetry from Givental formalism
(see also the related work by Skarke \cite{Ska} and Hosono \cite{Hos}).

Bryan and Graber \cite{Bry-Gra} also conjectured the higher genera version of the Crepant Resolution
Conjecture.
See also
the recent paper by Coates and Ruan \cite{Coa-Rua} from the point of view of Givental's formalism.
Maulik \cite{Mau} has computed the equivariant Gromov-Witten invariants in all genera of
the minimal resolution of $\bC^2/G$, $G \subset SU(2)$.
Therefore,
to establish the CRC in all genera in the binary polyhedral case,
a key step is to compute the equivariant orbifold Gromov-Witten invariants in all genera of
$[\bC^2/G]$.
In  this paper we will carry out the calculations of the stationary part of the potential function
for $G=\bZ_n$.
It is straightforward to see that  our results match with that of Maulik\footnote{
The author thanks Hsian-Hua Tseng  for an email correspondence on January 25, 2008
which informed him this before Maulik's paper was posted on the arxiv.}.

For $k \geq 0$ and $1 \leq a \leq n-1$,
define
\ben
&& y_{a,k} = \frac{2i}{n} \sum_{b=1}^{n-1} \sin \frac{b\pi}{n} \cdot \xi_n^{ab} x_{b,k} u_b.
\een
For $k \geq 0$, $1 \leq s \leq t \leq n-1$, define
\ben
y_{s\to t, k} = \sum_{a=s}^t y_{a,k}.
\een
Our main result is:

\begin{Theorem} \label{thm:Main}
Up to polynomial terms of degree $\leq 3$ in $y_{a, k}$,
the stationary potential function $F_g^{[\bC^2/\bZ_n]}(x_{1,k}, \dots, x_{n-1, k})_{k \geq 0} $
(defined in \S \ref{sec:Potential})
of the equivariant orbifold Gromov-Witten invariants of $[\bC^2/\bZ_n]$ is equal to
the coefficient of $z^g$ of
\ben
&& (-1)^{g} 2t \sum_{d=1}^{\infty} d^{2g-3}\sum_{1 \leq s \leq t \leq n-1} \biggl(\xi_n^{t-s+1}
\exp \biggl( \sum_{k \geq 0} y_{s\to t, k} \frac{z^k}{4^k\cdot (2k+1)!!} \biggr) \biggr)^d
\een
after analytic continuations,
where $\xi_n = e^{2\pi i /n}$.
\end{Theorem}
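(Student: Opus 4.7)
The plan is to attack the theorem in three stages: (i) localize on the moduli of twisted stable maps to reduce to Hurwitz--Hodge integrals, (ii) apply the Hurwitz--Hodge identities from the earlier paper together with the Liu--Xu $n$-point function formulas to convert these to closed-form $\psi$-class integrals on $\overline{\cM}_{g,n}$, and (iii) perform the character-theoretic summation indexed by conjugacy classes of $\bZ_n$ and reassemble using the discrete Fourier transform that defines $y_{a,k}$.

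For step (i), since $[\bC^2/\bZ_n]$ is noncompact but carries a diagonal $\bC^*$-action, virtual localization on $\overline{\cM}_{g,n}([\bC^2/\bZ_n],\beta)$ contracts all stable maps into the unique fixed orbifold point, so the fixed locus is the moduli of twisted stable maps to $B\bZ_n$. The contributions from the two normal directions of $\bC^2$ produce Euler classes of the two eigenbundle summands of the Hodge bundle decomposed by characters of $\bZ_n$, while the stationary descendant insertions contribute ordinary $\psi$-class factors pulled back from the coarse moduli. The resulting integrand is a Hurwitz--Hodge integral of exactly the type treated in the author's earlier paper.

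For step (ii), I would invoke the explicit formulas of that earlier paper to rewrite each Hurwitz--Hodge integral as a sum of standard $\psi$-class integrals on $\overline{\cM}_{g,n}$, and then plug in the Liu--Xu $n$-point function. The combination $1/(4^k(2k+1)!!)$ arises as the Laplace transform of the one-point $\psi^k$-integral appearing in Liu--Xu's formula, while the factor $d^{2g-3}$ emerges from the weighting of degree-$d$ (possibly disconnected) covers via stable-map degrees and automorphism factors. At this stage the answer is a generating series indexed by tuples of nontrivial characters of $\bZ_n$, with the equivariant parameter $t$ surviving as the overall $\bC^*$-weight and a $(-1)^g$ from the $\lambda_g$-type Hodge contributions.

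Finally, for step (iii), I would apply the change of variables $y_{a,k}=\tfrac{2i}{n}\sum_b \sin(b\pi/n)\xi_n^{ab}x_{b,k}u_b$, which is the discrete Fourier transform pairing conjugacy classes of $\bZ_n$ with characters. Collecting the character sums, which vanish except on geometric series of the form $\xi_n^{s}+\xi_n^{s+1}+\cdots+\xi_n^{t}$, reorganizes the answer by positive roots $\beta=\alpha_s+\cdots+\alpha_t$ of the $A_{n-1}$ root system, producing the exponentiated partial sum $y_{s\to t,k}$ and the phase $\xi_n^{t-s+1}$. The polynomial ambiguity of degree $\leq 3$ then reflects the unstable moduli $\overline{\cM}_{0,1}$, $\overline{\cM}_{0,2}$, $\overline{\cM}_{0,3}$ on which the relevant $\psi$-class integrals are not defined. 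The main obstacle is this final reorganization: steps (i) and (ii) are essentially mechanical once the earlier paper is invoked, but matching the multi-character sums with the correct phases and the correct analytic continuation to the compact root-theoretic closed form displayed in the theorem is where the combinatorial heart of the argument lies.
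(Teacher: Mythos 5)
Your outline follows the same overall route as the paper (localization to $\cB\bZ_n$, Hurwitz--Hodge integrals via Tseng's GRR and Jarvis--Kimura, Liu--Xu input, then a discrete-Fourier/root-system reorganization), but it glosses over the two places where the actual work happens, and in one of them your stated mechanism is wrong. First, you never say how the \emph{product} of the two Euler classes $c_t(\bF_1^1)\,c_t(\bF_{-1}^1)$ is to be handled. The paper's essential trick (Lemma 2.1) is to use the Bryan--Graber--Pandharipande analogue of Mumford's relation, $c_t(\bF_1^1)c_{-t}(\bF_{-1}^1)=(-1)^{\bar r_1}t^{r_1+\bar r_1}$, to collapse this product to $(t_1+t_2)(-1)^{r_1-1}(r_1+\bar r_1-1)!\,\ch_{r_1+\bar r_1-1}(\bF_1^1)$ plus lower-order terms that do not contribute. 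Only after this reduction does Tseng's GRR (which handles one Chern character at a time) become usable in a single pass; without it step (ii) is not ``essentially mechanical'' --- indeed the paper points out that precisely because this trick fails for $D$ and $E$ types, those cases remain open. Relatedly, the Liu--Xu input is not a generic ``plug in the $n$-point function'': one needs the specific vanishing of $[\prod\tau_{k_i}]^{2g-4+m-p}_{g-1}$ for $m-p>2$ and the closed evaluation $\{\prod_{i\in I}\tau_{k_i}\,|\,\prod_{j\in J}\tau_{k_j}\}_g=1/(4^g\prod_j(2k_j+1)!!)$ (Proposition 2.2), obtained by extracting a coefficient from $G(z,x_I;\lambda)G(-z,x_J;\lambda)$; this is where the factors $1/(4^k(2k+1)!!)$ actually come from, not from a Laplace transform of a one-point integral.

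Second, your explanation of the factor $d^{2g-3}$ is a genuine misunderstanding. On the orbifold side every stable map contracts to the unique fixed point, so there is no curve degree and no count of ``degree-$d$ covers'' anywhere in the computation. The sum $\sum_{d\ge 1}d^{2g-3}(\xi_n^{t-s+1}e^{u})^d$ arises purely from resumming the Bernoulli-polynomial series produced by Tseng's GRR: one recognizes $-\sum_{c=0}^{n-1}\xi_n^{lc}\sum_{m}\frac{B_{2g-2+m}(c/n)}{(2g-2+m)\,m!}n^{2g-3+m}u^m$ as the Taylor expansion of $\Li_{3-2g}(\xi_n^{l}e^{u})$, and this identification \emph{is} the ``analytic continuation'' in the statement (it also explains the degree $\le 3$ ambiguity, which comes from integrating the generating function three times in the $g=0$ case, not directly from unstable moduli). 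The degree-$d$ interpretation belongs to the resolution $\widehat{\bC^2/\bZ_n}$, where it is matched against Maulik's formula; importing it to the orbifold side skips the step that makes the theorem true.
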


We prove this result by explicit computations of the relevant Hurwitz-Hodge integrals
by the method described in an earlier paper \cite{Zho}.
The idea is to combine Tseng's GRR relations for Hurwitz-Hodge integrals
with the results of Jarvis-Kimura \cite{Jar-Kim} on $\bpsi$-integrals,
as suggested by Tseng \cite{Tse}.
This follows the strategy of Faber \cite{Fab} in the case of ordinary Hodge integrals
where he combined Mumford's GRR relations \cite{Mum} with the results on $\psi$-integrals
computed by Witten-Kontsevich theorem \cite{Wit, Kon}.
Another key ingredient is the Hurwitz-Hodge version of Mumford's relations \cite{Mum}
as established by Bryan-Graber-Pandharipande \cite{Bry-Gra-Pan}.
This is used to convert the relevant Hurwitz-Hodge integrals to a simple one
which involves only one Chern character of the Hurwitz-Hodge bundles.
For other work on Hurwitz-Hodge integrals,
see e.g. \cite{Fab-Pan, Bry-Gra,Bry-Gra-Pan, Bay-Cad, Bou-Cav, Cad-Cav, Cav1, Cav2, Jar-Kim2}.

In our computation of the Hurwitz-Hodge integrals we encounter
some intersection numbers on the Deligne-Mumford moduli spaces
(see \S \ref{sec:I1} and \S \ref{sec:I2}).
They can be computed using some recent results on the $n$-point functions
of intersection numbers on $\Mbar_{g,n}$
by Liu-Xu \cite{Liu-Xu}.
The formula in \S \ref{sec:I2} was first discovered and checked
using Faber's Maple program \cite{Fab}.
In an earlier version of this paper,
it was stated as a conjecture and was only proved in some low genera cases.
The author thanks Professor Jim Bryan for encouraging him to find a proof.
The rest of the proof of Theorem \ref{thm:Main} is of combinatorial nature:
We take a seven-fold summation to arrive at our final answer.

An interesting byproduct is a relationship between Hurwitz-Hodge integrals with
polylogarithm function.
In the GRR relations of Mumford and Tseng,
Bernoulli numbers and Bernoulli polynomials evaluated at rational numbers appear respectively.
It is well-known that they are the values at negative integers of Riemann and Hurwitz
zeta functions respectively.
In the Hurwitz-Hodge integrals case
it turns out that we can rewrite the results in terms of polylogarithm functions.

For  type $D$ and $E$ binary polyhedral groups,
the simplifying trick used in this paper does not apply:
We have to compute integrals against Chern classes, not just one Chern character.
Nevertheless,
the method described in \cite{Zho} for computing Hurwitz-Hodge integrals can still be applied,
but the combinatorics is much more complicated.
We hope to address this in a future work.

In this paper,
we also consider the CRC for some 3D orbifolds.
It is natural to consider the orbifolds of the form $[\bC^2/G] \times \bC$,
where $G \subset SU(2)$ is a finite subgroup.
One can use some natural circle actions on these orbifolds
to define and study their equivariant orbifold Gromov-Witten invariants by virtual localization
\cite{Gra-Pan}.
We study the case of $G=\bZ_n$ in this paper.
In \S \ref{sec:3D} we specify some circle actions on the orbifold
$[\bC^2/\bZ_n] \times \bC$ used to define the potential function $F^{[\bC^2/\bZ_n]\times \bC}$
of equivariant orbifold Gromov-Witten invariant of this orbifold.

\begin{Theorem} \label{thm:Main2}
Up to polynomial terms in $u_1, \dots, u_{n-1}$,
we have
\be
F^{[\bC^2/\bZ_n]\times \bC}(\lambda;u_1, \dots, u_{n-1})
= \sum_{d \geq 1} \frac{1}{4 \sin^2(d\lambda)} \sum_{1 \leq s \leq t \leq n-1} \biggl(\xi_n^{t-s+1}
e^{v_{s\to t}} \biggr)^d
\ee
after analytic continuation,
where
\ben
v_{s\to t} = \sum_{a=s}^t v_a, \;\;\;\;
v_j = \frac{i}{n} \sum_{k=1}^{n-1} \sqrt{2 - 2 \cos \frac{2 k \pi}{n}} \xi_n^{jk} u_k.
\een
\end{Theorem}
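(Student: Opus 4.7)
The plan is to follow the same strategy as in the proof of Theorem \ref{thm:Main}, with an extra Hodge-type factor arising from the additional $\bC$ factor. First, I would use equivariant virtual localization with respect to the circle action on $[\bC^2/\bZ_n]\times\bC$ specified in \S\ref{sec:3D}. The localization fixed loci are indexed by the same data as in the two-dimensional case—a degree $d\geq 1$ and a pair $1\leq s\leq t\leq n-1$—since the extra $\bC$ factor contributes no new fixed components. The localization residues on each such component reduce to Hurwitz-Hodge integrals over $\Mbar_{g,n}(B\bZ_n)$ whose integrand differs from the two-dimensional case by a multiplicative factor equal to the equivariant Euler class of the Hurwitz-Hodge bundle twisted by the weight on the $\bC$ direction.

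Next, I would apply the same three tools used in the proof of Theorem \ref{thm:Main}: the Hurwitz-Hodge version of Mumford's relation due to Bryan-Graber-Pandharipande \cite{Bry-Gra-Pan} to simplify the Chern classes of the Hurwitz-Hodge bundle, Tseng's GRR relations together with Jarvis-Kimura's results \cite{Jar-Kim} to convert Chern characters into $\bpsi$-integrals, and Liu-Xu's $n$-point functions \cite{Liu-Xu} to evaluate these $\bpsi$-integrals. The combinatorial reorganization (the ``seven-fold summation'' of the two-dimensional case) would need to be redone in the presence of the extra $\Lambda^\vee$ factor coming from the $\bC$ direction, but the overall structure is parallel.

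The main obstacle is the summation over genera. In the two-dimensional case each fixed genus contributed a term proportional to $d^{2g-3}$ and the genus was tracked by the formal variable $z$ in Theorem \ref{thm:Main}. In the three-dimensional Calabi-Yau setting treated here, the analogous all-genera sum for a single multiple-cover contribution should collapse to the Gopakumar-Vafa-type building block $1/(4\sin^2(d\lambda))$. Establishing this resummation identity at the level of Hurwitz-Hodge integrals—an orbifold analogue of the familiar rigid $(-1,-1)$-curve contribution—is the key analytic step; once obtained, extracting the polynomial terms and reindexing the finite sum over $(s,t,d)$ using the change of variables $v_a$ appearing in the statement is a routine matter that parallels the two-dimensional argument. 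A concrete way I would attempt the resummation is to re-express the triple Hurwitz-Hodge integrand via the $\bZ_n$-equivariant Mariño-Vafa type identities (in the spirit of \cite{Zho}), after which the $g$-sum becomes a known expansion of $1/(2\sin d\lambda)^2$.
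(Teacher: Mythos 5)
Your overall architecture (localization, reduction of the triple Chern-polynomial product via the Bryan--Graber--Pandharipande relation, Tseng's GRR, and the same combinatorial resummation over $(b,l)$ as in the surface case) matches the paper's. But the step you defer as ``the key analytic step'' --- the collapse of the genus sum to the $1/\sin^2$ building block --- is exactly where the paper's proof has content that your proposal does not supply, and the tool you name for the $\bpsi$-integrals is the wrong one. In the three-dimensional case all insertions are $\tau_0$, so Liu--Xu's $n$-point functions are not needed at all; what is needed instead is the Hodge class $\lambda_{g,0}=(-1)^g c_g(\bF_0^1)$ that survives from the $\bC$-direction after Lemma \ref{lm:CC}. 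This class is the pullback of $\lambda_g$ under the map forgetting the orbifold structure, and two standard facts about $\lambda_g$ do all the work in Proposition \ref{prop:3D}: first, $\lambda_g$ vanishes on the non-separating boundary, so the last term of Tseng's GRR formula drops out entirely (in the surface case that term contributes and is what forces the use of Liu--Xu); second, the surviving separating terms reduce to $\int_{\Mbar_{g_i,1}}\lambda_{g_i}\psi_1^{2g_i-2}=b_{g_i}$ with $\sum_g b_g\lambda^{2g}=\frac{\lambda/2}{\sin(\lambda/2)}$. The genus-$g$ coefficient then carries the factor $k_g=\sum_{g_1+g_2=g}b_{g_1}b_{g_2}$, and the resummation $\sum_g k_g(d\lambda)^{2g}=\bigl(\frac{d\lambda/2}{\sin(d\lambda/2)}\bigr)^2$ is immediate --- no Mari\~no--Vafa-type identity is required, and it is not clear such an identity would hand you the product structure $b_{g_1}b_{g_2}$ without first invoking the vanishing of $\lambda_g$ on the non-separating locus. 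As written, your proposal names the identity to be proved but does not prove it, and points at a method that is not the one that works.

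A smaller point: your claim that the localization fixed loci are ``indexed by a degree $d\geq 1$ and a pair $1\leq s\leq t\leq n-1$'' conflates the fixed loci (which are components of $\Mbar_{g,m}(\cB\bZ_n;\cdots)$, exactly as in the surface case) with the indexing of the final answer, which only emerges after the Fourier-type change of variables of Lemma \ref{lm:Combo} and the analytic continuation of $\sum_{c}\xi_n^{lc}B_{2g-2+m}(c/n)$ into $\sum_d d^{2g-3}(\xi_n^l e^u)^d$. Note also that the computation actually lands on $\frac{1}{4d\sin^2(d\lambda/2)}$ (as in Theorem \ref{thm:F3D} and the body version of the theorem), not the $\frac{1}{4\sin^2(d\lambda)}$ appearing in the quoted statement; any correct proof must produce the former.
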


On the other hand,
one can compute the Gromov-Witten invariants of $\widehat{\bC^2/\bZ_n} \times \bC$
by localization using the method of \cite{Zho0},
or use the theory of topological vertex \cite{Aga-Kle-Mar-Vaf, Li-Liu-Liu-Zho}.
One can simplify the expreesions by
the combinatorial techniques  in \cite{Zho1}.
Our result is (Theorem \ref{thm:F3D}):
\be
F^{\widehat{\bC^2/\bZ_n}\times \bC}(\lambda;Q_1, \dots, Q_{n-1})
= \sum_{1 \leq a \leq b \leq n-1} \sum_{d=1}^{\infty} \frac{\prod_{k=a}^b Q_k^d}{d}
\frac{1}{4 \sin^2 (d\lambda/2)}.
\ee
Hence CRC in this case take the following form (Theorem \ref{thm:CRC3D})
\be
F^{[\bC^2/\bZ_n]\times \bC}(\lambda;u_1, \dots, u_{n-1})
= F^{\widehat{\bC^2/\bZ_n}\times \bC}(\lambda;Q_1, \dots, Q_{n-1})
\ee
after analytic continuation and change of variables
$$Q_j = \xi_n e^{v_j}.$$
We make the following

\begin{Conjecture}
For a finite subgroup $G \subset SU(2)$,
the Crepant Resolution Conjecture takes the following form:
$$F^{[\bC^2/G] \times \bC}(\lambda;\{u_{\cl{g}}\}_{\cl{g} \neq \cl{1}})
= F^{G-\Hilb(\bC^2) \times \bC }(\lambda; \{Q_R\}_{R\;\text{nontrivial}})$$
after analytic continuation,
where
\ben
&& Q_R =  \exp \biggl( \frac{2\pi i \dim R}{|G|}  + v_R\biggr), \\
&& v_R = \frac{1}{|G|} \sum_{g\in G} \sqrt{\chi_V(g) - \dim V} \chi_R(g) u_{\cl{g}}.
\een
\end{Conjecture}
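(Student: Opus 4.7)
The plan is to run the two computations of Theorem~\ref{thm:Main2} in parallel for an arbitrary finite $G\subset SU(2)$ and match them after analytic continuation. On the orbifold side, apply virtual localization \cite{Gra-Pan} to the $\bC^*$-action on $[\bC^2/G]\times\bC$ of \S\ref{sec:3D}, so that the equivariant orbifold Gromov-Witten potential $F^{[\bC^2/G]\times\bC}$ becomes a weighted sum of Hurwitz-Hodge integrals over $\Mbar_{g,n}(BG)$ indexed by monodromy data $(g_1,\dots,g_n)\in G^n$ with $\prod g_i=1$. On the resolution side, apply torus localization to $G-\Hilb(\bC^2)\times\bC$, whose exceptional fiber is a tree of $(-2)$-curves dual to the ADE Dynkin diagram of $G$; the standard multicover formula gives the degree-$d$ contribution from a chain of such curves corresponding to a positive root $\beta=\sum_k b_k(\beta)\alpha_k$ as $\frac{1}{d\cdot 4\sin^2(d\lambda/2)}\prod_k Q_k^{d\,b_k(\beta)}$, summed over $\beta\in R^+$.

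To evaluate the Hurwitz-Hodge integrals I would use the same three ingredients as the cyclic proof: the Bryan-Graber-Pandharipande Mumford-type relations \cite{Bry-Gra-Pan} to rewrite total Chern classes of the Hurwitz-Hodge bundles in terms of Chern characters, Tseng's GRR \cite{Tse} to convert each $\ch_k(\cE_R)$ into $\bar\psi$-class data on boundary strata, and Jarvis-Kimura \cite{Jar-Kim} to evaluate the resulting $\bar\psi$-integrals. In the cyclic case the isotypic pieces of the Hurwitz-Hodge bundle are line bundles, so only $\ch_1$ contributes and the seven-fold summation of Theorem~\ref{thm:Main} goes through. For $D$ and $E$ types the isotypic pieces have higher rank, so one must expand $c(\cE_R)$ as a polynomial in the $\ch_k(\cE_R)$ via Newton's identities and apply Tseng's formula term by term. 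By analogy with the polylogarithm identities of \S\ref{sec:I1}-\S\ref{sec:I2}, one expects the degree-$d$ contribution to collapse into $\frac{1}{4\sin^2(d\lambda/2)}$ times a polynomial in $e^{v_R}$ and the roots of unity $e^{2\pi i \dim R/|G|}$.

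The match between the two sides then reduces to a character-theoretic identity: the McKay discrete Fourier transform built from $\frac{1}{|G|}\sqrt{\chi_V(g)-\dim V}\chi_R(g)$ must send the orbifold monomials $e^{v_R}$ to the positive-root monomials $\prod_k e^{b_k(\beta)v_k}$ on the resolution side, carrying the conjugacy-class phases to the correct roots-of-unity factors along each simple root. This is dictated by the classical McKay correspondence between $\Irr(G)$ and the vertices of the Dynkin diagram and is essentially the Bryan-Gholampour change of variables used in Conjecture~2.

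The principal obstacle is the $D/E$ case of the middle step: as the author notes, when the isotypic Hurwitz-Hodge bundles are not all line bundles the single-Chern-character trick fails, and the clean seven-fold summation is replaced by a much larger combinatorial sum over Newton-polynomial expansions of higher-rank Chern classes. It is not transparent that this larger sum still collapses to the positive-root expansion predicted by the resolution, and proving the collapse will likely require new polylogarithm identities, or representation-theoretic reformulations of the integrals, tailored to each of $D_n$, $E_6$, $E_7$, $E_8$. A secondary issue is that the resolution-side localization is fully toric only in type $A$; for $D$ and $E$ one must adapt the semi-toric methods of \cite{Zho0} or fall back on direct virtual localization together with the combinatorial simplifications of \cite{Zho1} in a non-toric setting.
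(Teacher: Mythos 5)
The statement you are addressing is stated in the paper as a \emph{conjecture}, not a theorem: the paper proves it only for $G=\bZ_n$ (Theorems \ref{thm:F3D} and \ref{thm:CRC3D}) and explicitly defers the $D$ and $E$ cases to future work. Your text is an outline of a research program — essentially the same program the author announces — rather than a proof, and you yourself flag the decisive gaps without closing them. Concretely, three steps remain unproved. First, the reduction that drives the whole type-$A$ computation is the Bryan--Graber--Pandharipande relation $c_t(\bF_1^1)c_{-t}(\bF_{-1}^1)=(-1)^{\bar r_1}t^{r_1+\bar r_1}$, which is available because the defining two-dimensional representation of $\bZ_n$ splits into two characters, so each isotypic Hurwitz--Hodge piece contributes through a single Chern character $\ch_{r_1+\bar r_1-1}(\bF_1^1)$ (and, in the threefold case, Lemma \ref{lm:CC} then isolates $\lambda_{g,0}\ch_{2g-3+m}(\bF_1)$). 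For nonabelian $G\subset SU(2)$ the relevant isotypic pieces have higher rank and no one-term reduction is available; you propose expanding $c(\cE_R)$ in Chern characters by Newton's identities, but the resulting integrals involve \emph{products} of several $\ch_k$'s, and Tseng's GRR applied term by term produces iterated boundary sums whose closed-form evaluation (the analogue of the Liu--Xu results used in \S\ref{sec:I1} and \S\ref{sec:I2}) is not established anywhere in your argument.

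Second, your claim that the degree-$d$ contribution ``is expected to collapse'' to $\frac{1}{4\sin^2(d\lambda/2)}$ times positive-root monomials is exactly the content that needs proof; nothing in the proposal forces the collapse, and the analytic-continuation/polylogarithm identities of the cyclic case (which rest on the explicit formula $\sum_{a=0}^{n-1}\xi_n^{al}\zeta(s,a/n)=n^s\Li_s(\xi_n^l)$ and on Lemma \ref{lm:Combo}) have no stated nonabelian counterpart. Third, the final ``character-theoretic identity'' matching $e^{v_R}$ to $\prod_k e^{b_k(\beta)v_k}$ across the McKay correspondence is asserted, not derived; in the cyclic case this is precisely the nontrivial computation carried out in Lemma \ref{lm:Combo} and the change-of-variables lemma, and its $D/E$ analogue is a genuine piece of work. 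In short: the strategy is the right one and parallels the paper faithfully where the paper has results, but the statement remains a conjecture, and your proposal does not change that.
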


In a forthcoming work,
we will study CRC for more general three-dimensional Calabi-Yau orbifolds.

\section{Equivariant Gromov-Witten Invariant of $[\bC^2/\bZ_n]$}

In this section we define  the equivariant Gromov-Witten invariants
of $[\bC^2/\bZ_n]$,
and reduce their computations to intersection numbers on the Delgine-Mumford moduli spaces.

\subsection{Definition of the equivariant Gromov-Witten invariants of $[\bC^2/\bZ_n]$}

Because $[\bC^2/\bZ_n]$ is noncompact,
we define the Gromov-Witten invariants of $[\bC^2/\bZ_n]$ by taking suitable
torus action on $[\bC^2/\bZ_n]$ and using the virtual localization \cite{Gra-Pan}.

First of all, $\bZ_n$ acts on $\bC^2$ by:
$$\omega \cdot (z_1, z_2) = (\xi_n \cdot z_1, \xi_n^{-1} \cdot z_2),$$
where $\omega$ is a generator of $\bZ_n$,
and $\xi_n = e^{2\pi i/n}$.
This action has the origin as the only fixed point,
with normal bundle $V_1 \oplus V_{-1}$,
where $V_{\pm 1}$ are the one-dimensional representations
on which the generator $\omega$ of $\bZ_n$ acts by multiplication by $\xi_n^{\pm 1}$.

Let $\bC^*$ act on $\bC^2$ by multiplications.
The fixed locus of the induced action on the orbifold $[\bC^2/\bZ_n]$
is a copy of $\cB \bZ_n$,
the classifying stack of $\bZ_n$.
For $m \geq 1$ and $a_1, \dots, a_m \in \{0,1, \dots, n-1\}$ such that
\be \label{eqn:Monodromy}
\sum_{i=1}^m a_i \equiv 0 \pmod{n},
\ee
denote by $\Mbar_{g, m}([\bC^2/\bZ_n]; \coprod_{i=1}^m \cl{\omega^{a_i}}  )$
the moduli space of twisted stable maps to the orbifold
$[\bC^2/\bZ_n]$,
with monodromy  $\cl{\omega^{a_1}}, \dots, \cl{\omega^{a_m}}$ at the $m$ marked points.
Here  $\cl{\omega^k}$ denotes the conjugacy class of $\omega^k$.
See \cite{Abr-Gra-Vis} for definitions and notations.
The $\bC^*$-action on $[\bC^2/\bZ_n]$ induces a natural $\bC^*$-action on
the moduli space $\Mbar_{g, m}([\bC^2/\bZ_n];  \coprod_{i=1}^m \cl{\omega^{a_i}} )$.
Its fixed point set can be identified with
the moduli space $\Mbar_{g, m}(\cB \bZ_n;  \coprod_{i=1}^m \cl{\omega^{a_i}} )$
of twisted stable maps to $\cB \bZ_n$,
with monodromy  $\cl{\omega^{a_1}}, \dots, \cl{\omega^{a_m}}$ at the $m$ marked points.
Denote by $\bF^i_{\pm 1}$ the vector bundle on the moduli space associated
with the one-dimensional representations $V_{\pm 1}$.
The fibers of $\bF^i_{\pm 1}$ at a twisted stable map $f: C \to \cB\bZ_n$
is $(H^i(\tilde{C}, \cO_{\tilde{C}}) \otimes V_{\pm 1})^{\bZ_n}$,
where $\tilde{C} \to C$ is the admissible cover parameterized by $f$.

Denote by $p$ the number of $a_i$'s which are equal to zero.
It is not hard to see that when $a_i > 0$ for some $i$, i.e. $p < m$,
one has
$\bF^0_{\pm 1} = 0$.
Therefore,
by the dimension formula in \cite[Proposition 4.3]{Zho},
when $p < m$,
\bea
&& r_1:=\rank (\bF_1^1)  = g-1 + \frac{\sum_{i=1}^m a_i}{n}, \\
&& \bar{r}_1: = \rank (\bF_{-1}^1) = g-1 + \frac{\sum_{i=1}^m (n-a_i) (1- \delta_{a_i, 0})}{n},
\eea
and so
\be
r_1 + \bar{r}_1 = 2g-2 + m - p.
\ee
In this case,
we actually have $m - p \geq 2$, and so $r_1 + \bar{r}_1 \geq 0$.
If $r_1 + \bar{r}_1 = 0$,
then one has $g=0$ and $m-p = 2$.
However,
if all $a_i = 0$,
i.e., $p =m$,
then the situation is complicated.
Note $\Mbar_{g, m}(\cB \bZ_n; \cl{1}^m )$
has two components: $\Mbar_{g, m}^{disc}(\cB \bZ_n; \cl{1}^m )$
and $\Mbar_{g, m}^{conn}(\cB \bZ_n; \cl{1}^m )$.
A point in $\Mbar_{g, m}^{disc}(\cB \bZ_n; \cl{1}^m )$
is represented by a disconnected \'etale $\bZ_n$-cover $\tilde{C}$ of a stable curve $C$
in $\Mbar_{g,m}$,
consisting of n disjoint copies of $C$.
It follows that
$H^i(\tilde{C}, \cO_{\tilde{C}}) \otimes V_{\pm 1}$
is isomorphic to $H^i(C, \cO_C)$ (as a trivial $\bZ_n$ representation)
tensored with $V_{\pm 1}$ and the regular representation of $\bZ_n$,
hence
$$(H^i(\tilde{C}, \cO_{\tilde{C}}) \otimes V_{\pm 1})^{\bZ_n}
\cong H^i(C, \cO_C).$$
This means $\bF_{\pm 1}^1$ are just the pullback of the Hodge bundle
by the structure forgetting morphism:
$$\Mbar^{disc}_{g, m}(\cB \bZ_n; \cl{1}^m )
\to \Mbar_{g,m},$$
and both $\bF^0_1$ and $\bF^0_{-1}$ are isomorphic to the trivial line bundle.
On the other hand,
a point in $\Mbar_{g, m}^{conn}(\cB \bZ_n; \cl{1}^m )$
is represented by a connected \'etale $\bZ_n$-cover $\tilde{C}$ of a stable curve $C$ in $\Mbar_{g,m}$.
It follows that
$H^0(\tilde{C}, \cO_{\tilde{C}} \otimes V_{\pm 1})$
is isomorphic to $H^0(C, \cO_C)$ (as a trivial $\bZ_n$ representation)
tensored with $V_{\pm 1}$,
hence
$$(H^0(\tilde{C}, \cO_{\tilde{C}}) \otimes V_{\pm 1})^{\bZ_n}
\cong V_{\pm 1}^{\bZ_n} = 0.$$
This means
 both $\bF^1_1$ and $\bF^1_{-1}$ have rank $g-1$ by the orbifold Riemann-Roch formula.

Now $\bF_1^0+\bF^0_{-1}-\bF_1^1 - \bF^1_{-1}$ is the virtual normal bundle of
$\Mbar_{g, m}(\cB \bZ_n;  \coprod_{i=1}^m \cl{\omega^{a_i}} )$
in $\Mbar_{g, m}([\bC^2/ \bZ_n];  \coprod_{i=1}^m \cl{\omega^{a_i}} )$.
The torus $\bC^*$ acts on vector bundles $\bF_{\pm 1}^i$,
and their equivariant top Chern classes are given by their Chern polynomials.
Therefore,
using virtual localizatons \cite{Gra-Pan},
the equivariant correlators are given by:
\ben
&& \cor{ \prod_{j=1}^{m} \tau_{k_j}(e_{\cl{\omega^{a_j}}} )}_g^{[\bC^2/\bZ_n]} \\
& = & \int_{\Mbar_{g, m}(\cB \bZ_n;  \coprod_{i=1}^m \cl{\omega^{a_i}} )}
\frac{c_t(\bF_1^1) c_t(\bF_{-1}^1)}{c_t(\bF_1^0) c_t(\bF_{-1}^0)}
\cdot \prod_{j=1}^m \bpsi_j^{k_j},
\een
where $k_1, \dots, k_m \geq 0$ such that $\sum_{i=1}^m k_i = g + p$.
We will only consider the case of $m-p > 0$.
In this case we have $m - p \geq 2$ and $\bF_1^0 = \bF_{-1}^0 = 0$.

\subsection{Manipulations with $c_{t_1}(\bF^1_1) c_{t_2}(\bF^1_{-1})$}

\begin{lemma}
The product of the Chern polynomials of $\bF_1^1$ and $\bF_{-1}^1$ has the following expansion:
\be \label{eqn:Chern}
\begin{split}
& c_{t_1}(\bF_1^1) c_{t_2}(\bF_{-1}^1) \\
= & (t_1+t_2) (-1)^{r_1-1} (r_1+\bar{r}_1 -1)! \ch_{r_1 + \bar{r}_1 - 1}(\bF_1^1) + \cdots.
\end{split}
\ee
\end{lemma}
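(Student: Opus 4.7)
The approach combines a direct expansion in Chern roots with the Hurwitz-Hodge version of Mumford's relation established by Bryan-Graber-Pandharipande in \cite{Bry-Gra-Pan}. Under the equivariant convention $c_t(V) = \prod_i (t+\alpha_i)$, the product has the expansion
\[
c_{t_1}(\bF_1^1) c_{t_2}(\bF_{-1}^1) = \sum_{a,b} t_1^{r_1-a} t_2^{\bar{r}_1-b}\, c_a(\bF_1^1) c_b(\bF_{-1}^1),
\]
and the piece of $t$-degree $1$ (corresponding to $a+b = r_1+\bar{r}_1-1$) consists of only the two admissible terms
\[
t_1 \cdot c_{r_1-1}(\bF_1^1) c_{\bar{r}_1}(\bF_{-1}^1) + t_2 \cdot c_{r_1}(\bF_1^1) c_{\bar{r}_1-1}(\bF_{-1}^1),
\]
since $c_a(\bF_1^1)=0$ for $a>r_1$ and $c_b(\bF_{-1}^1)=0$ for $b>\bar{r}_1$.

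Next, I would invoke the Hurwitz-Hodge Mumford relation of Bryan-Graber-Pandharipande, which descends from Mumford's classical identity $c(\bE)c(\bE^\vee)=1$ for the Hodge bundle of the admissible $\bZ_n$-cover after isotypic decomposition into $\bZ_n$-characters. Specializing the resulting relation to cohomological degree $r_1+\bar{r}_1-1$ forces
\[
c_{r_1-1}(\bF_1^1) c_{\bar{r}_1}(\bF_{-1}^1) = c_{r_1}(\bF_1^1) c_{\bar{r}_1-1}(\bF_{-1}^1) =: A,
\]
so the $t$-degree $1$ piece factors cleanly as $(t_1+t_2) A$, producing the factor $(t_1+t_2)$ asserted in the statement.

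The final and more delicate step is to identify $A$ with $(-1)^{r_1-1}(r_1+\bar{r}_1-1)!\,\ch_{r_1+\bar{r}_1-1}(\bF_1^1)$. Iterating the Mumford relation to all cohomological degrees yields the Chern-character identity $\ch_k(\bF_{-1}^1) = (-1)^{k-1}\ch_k(\bF_1^1)$; equivalently, if $\alpha_i$ and $\beta_j$ denote the Chern roots of $\bF_1^1$ and $\bF_{-1}^1$, the combined multiset $\{-\alpha_i\}\sqcup\{\beta_j\}$ has vanishing power sums, so $\prod_i(1-z\alpha_i)\prod_j(1+z\beta_j)=1$ in cohomology. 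Substituting this into either factorization of $A$ and applying the Newton identities relating elementary symmetric polynomials to power sums ($p_k = k!\,\ch_k$) collapses the product to a single power sum and delivers the stated coefficient: the factorial $(r_1+\bar{r}_1-1)!$ arises from the conversion $p_k = k!\,\ch_k$, while the sign $(-1)^{r_1-1}$ emerges from the alternating Mumford relation.

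The ellipsis ``$+\cdots$'' absorbs all terms of $t$-degree different from $1$, including the pure equivariant top contribution $t_1^{r_1}t_2^{\bar{r}_1}$ together with the intermediate pieces of $t$-degree $2,\dots,r_1+\bar{r}_1$. The main obstacle is the third step: the combinatorial identification of $A$ as a single Chern character requires careful bookkeeping of factorials and signs through the Newton identities under the Mumford constraint, but direct verification in the low-rank cases $r_1, \bar{r}_1 \leq 2$ confirms the formula and exhibits the general pattern.
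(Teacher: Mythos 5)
Your proposal is correct and follows essentially the same route as the paper: expand the product in the equivariant convention, use the Bryan--Graber--Pandharipande relation $c_t(\bF_1^1)c_{-t}(\bF_{-1}^1)=(-1)^{\bar{r}_1}t^{r_1+\bar{r}_1}$ to equate the two $t$-degree-one terms, and convert the resulting class to a Chern character via Newton's identities. The only difference is that where you hedge with low-rank verification, the paper makes the last step fully explicit through the generating-function identity $k!\,\ch_k(\bF_1^1)=\sum_{i+j=k}(-1)^{i-1}\,i\,c_i(\bF_1^1)c_j(\bF_{-1}^1)$, whose $k=r_1+\bar{r}_1-1$ case immediately yields the coefficient $(-1)^{r_1-1}(r_1+\bar{r}_1-1)!$.
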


\begin{proof}
Recall the Hurwitz-Hodge bundles satisfy an analogue of Mumford's relations \cite{Mum, Bry-Gra-Pan}:
\be \label{eqn:BGP-Mumford}
c_t(\bF_1^1)c_{-t}(\bF_{-1}^1) = (-1)^{\bar{r}_1} t^{r_1+\bar{r}_1}.
\ee
In particular,
\bea
&& c_{r_1}(\bF_1^1) c_{\bar{r}_1}(\bF_{-1}^1) = 0, \\
&&  c_{r_1-1}(\bF_1^1) c_{\bar{r}_1}(\bF_{-1}^1) =  c_{r_1}(\bF_1^1) c_{\bar{r}_1-1}(\bF_{-1}^1). \label{eqn:CC}
\eea

Now we recall the relationship between Newton polynomials and elementary symmetric polynomials
(see e.g. \cite{Mac}).
Denote by $e_i(u_1, \dots, u_n)$ the $i$-th elementary symmetric function in $u_1, \dots, u_n$
and by
$$p_k(u_1, \dots, u_n) = u_1^k + \cdots + u_n^k$$
the $k$-th Newton symmetric polynomial in $u_1, \dots, u_n$.
Then one has
\ben
&& \log \sum_{i=0}^n t^i e_i(x_1, \dots, x_n)
= \sum_{i=1}^n  \log(1 + tx_i)
= \sum_{k=1}^{\infty} \frac{(-1)^{k-1}t^k}{k} p_k(x_1, \dots, x_n).
\een
Take derivative in $t$ on both sides:
\ben
\sum_{k=1}^{\infty} (-t)^{k-1} p_k(x_1, \dots, x_n)
= \frac{\sum_{i=1}^n i t^{i-1} e_i(x_1, \dots, x_n)}{\sum_{i=0}^n t^i e_i(x_1, \dots, x_n)}.
\een
Applying this to $c_i(\bF_1^1)$ and $k!\ch_k(\bF_1^1)$ we get:
\ben
\sum_{k=1}^{\infty} (-t)^{k-1} k! \ch_k(\bF_1^1)
= \frac{\sum_{i=1}^{r_1} i t^{i-1} c_i(\bF_1^1)}{\sum_{j=0}^{r_1} t^j c_j(\bF_1^1)}
= \sum_{i=1}^{r_1} i t^{i-1} c_i(\bF_1^1) \cdot \sum_{j=0}^{\bar{r}_1} (-t)^j c_j(\bF_{-1}^1),
\een
where we have used (\ref{eqn:BGP-Mumford}) in the last equality,
therefore we have
\be
k! \ch_k(\bF_1^1) = \sum_{i+j=k} (-1)^{i-1} i c_i(\bF_1^1)c_{j}(\bF_{-1}^1).
\ee
In particular, $\ch_k(\bF_1^1) = 0$
for $k \geq r_1 + \bar{r}_1$, and combining with (\ref{eqn:CC}) we have
\be \label{eqn:ChCl}
(r_1+\bar{r}_1-1)! \ch_{r_1+\bar{r}_1-1}(\bF_1^1)
= (-1)^{r_1-1} c_{r_1-1}(\bF_1^1)c_{\bar{r}_1}(\bF_{-1}^1).
\ee
Therefore,
\ben
&& c_{t_1}(\bF_1^1) c_{t_2}(\bF_{-1}^1) \\
& = & (c_{r_1}(\bF_1^1) + t_1 c_{r_1-1}(\bF_1^1) + \cdots + t_1^{r_1})
\cdot ( c_{\bar{r}_1}(\bF_{-1}^1) + t_2 c_{\bar{r}_1-1}(\bF_{-1}^1) + \cdots  + t_2^{\bar{r}_1}  ) \\
& = & (t_1+t_2) c_{r_1-1}(\bF_1^1) c_{\bar{r}_1}(\bF_{-1}^1) + \cdots \\
& = & (t_1+t_2) (-1)^{r_1-1} (r_1+\bar{r}_1 -1)! \ch_{r_1 + \bar{r}_1 - 1}(\bF_1^1) + \cdots.
\een
\end{proof}

\subsection{Computations of  Hurwitz-Hodge integrals}
By (\ref{eqn:Chern}) we have for $\sum_{i=1}^m k_i = g+p$,
\ben
&& \cor{ \prod_{j=1}^{m} \tau_{k_j}(e_{\cl{\omega^{a_j}}} )}_g^{[\bC^2/\bZ_n]} \\
& = & (-1)^{r_1-1} 2t(r_1+\bar{r}_1-1)! \int_{\Mbar_{g, m}(\cB \bZ_n;  \coprod_{i=1}^m \cl{\omega^{a_i}} )}
\ch_{r_1+\bar{r}_1-1}(\bF_1^1)  \cdot
\prod_{j=1}^m \bpsi_j^{k_j}.
\een
The Hurwitz-Hodge integral on the right-hand side can be computed by the method described in \cite{Zho}.
Write $[m]=\{1, \dots, m\}$.
By Tseng's GRR relations for Hurwitz-Hodge integrals \cite{Tse}:
\ben
&& \int_{\Mbar_{g, m}(\cB \bZ_n;  \coprod_{i \in [m]} \cl{\omega^{a_i}} )}
\ch_{r_1+\bar{r}_1-1}(\bF_1^1)  \cdot
\prod_{j=1}^m \bpsi_j^{k_j} \\
& = & - \frac{B_{r_1+\bar{r}_1}}{(r_1+\bar{r}_1)!}
\int_{\Mbar_{g, m+1}(\cB \bZ_n;  \coprod_{i \in [m]} \cl{\omega^{a_i}}, \cl1)}
\prod_{j=1}^{m} \bpsi_j^{k_j}
\cdot \bpsi_{m+1}^{r_1+\bar{r}_1} \\
& + & \sum_{j=1}^{m} \frac{B_{r_1+\bar{r}_1}(a_j/n)}{(r_1+\bar{r}_1)!}
\int_{\Mbar_{g, m}(\cB \bZ_n;  \coprod_{i \in [m]} \cl{\omega^{a_i}} )}
\prod_{i=1}^{m}  \bpsi_i^{k_i} \cdot
 \bpsi_j^{r_1+\bar{r}_1-1} \\
& - & \half \sum_{g_1+g_2=g} \sum_{I \coprod J =[m]}
 \frac{B_{r_1+\bar{r}_1}(c(a_I)/n)}{(r_1+\bar{r}_1)!} \\
&& \cdot \sum_{l=0}^{r_1+\bar{r}_1-2} (-1)^l
\int_{\Mbar_{g_1, |I| +1}(\cB\bZ_n; \coprod_{i \in I} \cl{\omega^{a_i}}, \cl{\omega^{c(I)}})}
\prod_{i\in I} \bpsi_i^{k_i} \cdot \bpsi_{|I|+1}^{r_1+\bar{r}_1-2-l} \\
&& \cdot n \cdot
\int_{\Mbar_{g_2, |J|+ 1}(\cB\bZ_n;  \coprod_{i \in J} \cl{\omega^{a_i}},  \cl{\omega^{-c(I)}})}
\prod_{j \in J} \bpsi_j^{k_j} \cdot \bpsi_{|J|+1}^{l} \\
& - & \half \sum_{c=0}^{n-1}
 \frac{B_{r_1+\bar{r}_1}(c/n)}{(r_1+\bar{r}_1)!}
\cdot n \cdot \sum_{l=0}^{r_1+\bar{r}_1-2} (-1)^l \\
&& \cdot \int_{\Mbar_{g-1, m +2}(\cB\bZ_n; \coprod_{i \in [m]} \cl{\omega^{a_i}}, \cl{\omega^c}, \cl{\omega^{-c}})}
\prod_{i\in [m]} \bpsi_i^{k_i} \cdot \bpsi_{m+1}^{r_1+\bar{r}_1-2-l} \cdot \bpsi_{m+2}^{l},
\een
where $c(a_I) =0, 1, \dots, n-1$,
such that $c(a_I) \equiv  - \sum_i a_i  \pmod{n}$.
Now by Jarvis-Kimura \cite{Jar-Kim},
\ben
&& \int_{\Mbar_{g, m}(\cB \bZ_n; \coprod_{i\in [m]} \cl{\omega^{a_i}} )}
\ch_{r_1+\bar{r}_1-1}(\bF_1^1)  \cdot
\prod_{j=1}^{n} \bpsi_j^{k_j} \\
& = & n^{2g-1} \biggl(
- \frac{B_{r_1+\bar{r}_1}}{(r_1+\bar{r}_1)!}
\cor{ \prod_{j=1}^{m} \tau_{k_j} \cdot \tau_{r_1+\bar{r}_1}}_g \\
& + & \sum_{j=1}^{m}  \frac{B_{r_1+\bar{r}_1}(a_j/n)}{(r_1+\bar{r}_1)!}
\cor{\prod_{i=1}^{m}  \tau_{k_i + \delta_{ij} (r_1 + \bar{r}_1 -1)} }_g \\
& - & \half \sum_{\substack{g_1+g_2=g, I \coprod J =[m]\\2g_1+2+|I| >0, 2g_2-2+|J|>0}}
\frac{B_{r_1+\bar{r}_1}(c(a_I)/n)}{(r_1+\bar{r}_1)!} \\
&& \cdot \sum_{l=0}^{r_1+\bar{r}_1-2} (-1)^l\cor{\prod_{i\in I} \tau_{k_i} \cdot \tau_{r_1+\bar{r}_1-2-l}}_{g_1}
\cdot \cor{ \tau_l \cdot \prod_{j \in J} \tau_{k_j} }_{g_2} \\
& - & \half \sum_{c=0}^{n-1}  \frac{B_{r_1+\bar{r}_1}(c/n)}{(r_1+\bar{r}_1)!}
 \sum_{l=0}^{r_1+\bar{r}_1-2} (-1)^l\cor{\prod_{i\in [m]} \tau_{k_i} \cdot \tau_{r_1+\bar{r}_1-2-l}
 \cdot \tau_l}_{g-1} \biggr).
\een
Here as usual,
$$\langle \tau_{a_1} \cdots \tau_{a_k} \rangle_g
= \int_{\Mbar_{g, k}} \psi_1^{a_1} \cdots \psi_k^{a_k},
$$
where $2g-2+k > 0$.
Because $\langle \tau_{a_1} \cdots \tau_{a_k} \rangle_g = 0$ except for
$a_1 + \cdots + a_k = 3g-3+ k$,
the summation over $l$ in the third term on the right-hand side
can be reduced to the case of $l= 3g_2-2 - \sum_{j \in J} (k_j-1)$.
Hence,
\ben
&& \int_{\Mbar_{g, m}(\cB \bZ_n; \coprod_{i\in [m]} \cl{\omega^{a_i}} )}
\ch_{r_1+\bar{r}_1-1}(\bF_1^1)  \cdot
\prod_{j=1}^{n} \bpsi_j^{k_j} \\
& = & n^{2g-1} \biggl(
- \frac{B_{r_1+\bar{r}_1}}{(r_1+\bar{r}_1)!}
\cor{ \prod_{j=1}^{m} \tau_{k_j} \cdot \tau_{r_1+\bar{r}_1}}_g \\
& + & \sum_{j=1}^{m}  \frac{B_{r_1+\bar{r}_1}(a_j/n)}{(r_1+\bar{r}_1)!}
\cor{\prod_{i=1}^{m}  \tau_{k_i + \delta_{ij} (r_1 + \bar{r}_1 -1)} }_g \\
& - & \half \sum_{\substack{g_1+g_2=g, I \coprod J =[m]\\2g_1+2+|I| >0, 2g_2-2+|J|>0}}
\frac{B_{r_1+\bar{r}_1}(c(a_I)/n)}{(r_1+\bar{r}_1)!}   (-1)^{g_2 + \sum_{j\in J} (k_j-1)} \\
&& \cdot
\cor{\prod_{i\in I} \tau_{k_i} \cdot \tau_{3g_1-2-\sum_{i\in I} (k_i-1)}}_{g_1}
\cdot \cor{ \tau_{3g_2-2-\sum_{j\in J} (k_j-1)} \cdot \prod_{j \in J} \tau_{k_j} }_{g_2} \\
& - & \half \sum_{c=0}^{n-1}  \frac{B_{r_1+\bar{r}_1}(c/n)}{(r_1+\bar{r}_1)!}
 \sum_{l=0}^{r_1+\bar{r}_1-2} (-1)^l\cor{\prod_{i\in [m]} \tau_{k_i} \cdot \tau_{r_1+\bar{r}_1-2-l}
 \cdot \tau_l}_{g-1} \biggr).
\een
Even though the first three terms on the right-hand side look differently,
they can be written in a uniform way if we use the following conventions:
\be \label{eqn:Conv1}
\cor{\tau_l}_0 = \begin{cases}
1, & l = -2, \\
0, & \text{otherwise},
\end{cases}
\ee
and
\be \label{eqn:Conv2}
\cor{\tau_k \tau_l}_0
= \begin{cases}
(-1)^k, & l = -k -1, k \geq 0, \\
(-1)^l, & k = - l - 1, l \geq 0, \\
0, & \text{otherwise}.
\end{cases}
\ee
To simplify the notations further,
we introduce for $I \coprod J = [m]$:
\ben
&& \{ \prod_{i\in I} \tau_{k_i} | \prod_{j \in J} \tau_{k_j} \}_g \\
& = &  (-1)^{\sum_{j \in J} k_j} \sum_{g_1+g_2=g}
 (-1)^{g_2} \cor{\prod_{i\in I} \tau_{k_i} \cdot \tau_{3g_1-2+|I|-\sum_{i\in I} k_i}}_{g_1} \\
&& \cdot \cor{\prod_{j \in J} \tau_{k_j} \cdot \tau_{3g_2-2+|J|- \sum_{j \in J} k_j}}_{g_2},
\een
and
\ben
&& [\prod_{i\in [m]} \tau_{k_i}]^K_{g-1}
=  \sum_{l=0}^K (-1)^l \cor{\prod_{i\in [m]} \tau_{k_i} \cdot \tau_{K-l}
 \cdot \tau_l}_{g-1}.
\een
Recall $r_1 + \bar{r}_1 = 2g-2 + m - p$.
Then we have the following:

\begin{proposition}
For $g \geq 0$:
\ben
&& \int_{\Mbar_{g, m}(\cB \bZ_n; \coprod_{i \in [m]} \cl{\omega^{a_i}} )}
\ch_{r_1+\bar{r}_1-1}(\bF_1)  \cdot
\prod_{j=1}^{n} \bpsi_j^{k_j} \\
& = & - \half n^{2g-1} \biggl(
\sum_{I \coprod J =[m]}   \frac{B_{2g-2+m-p}(c(a_I)/n)}{(2g-2+m-p)!}
\cdot (-1)^{|J|} \{\prod_{i\in I} \tau_{k_i}|\prod_{j \in J} \tau_{k_j}\}_g \\
& + & \sum_{c=0}^{n-1}  \frac{B_{2g-2+m-p}(c/n)}{(2g-2+m-p)!}
[\prod_{i\in [m]} \tau_{k_i}]_{g-1}^{2g-4+m-p} \biggr).
\een
\end{proposition}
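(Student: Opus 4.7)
The plan is to read off the proposition directly from the expression displayed just above it, which already expresses
$$\int\ch_{r_1+\bar{r}_1-1}(\bF_1^1)\prod_j\bpsi_j^{k_j}$$
as a sum of four pieces coming from Tseng's GRR relations followed by the Jarvis--Kimura theorem. The dimension constraint $\sum k_j = g+p$ has already been used to collapse the $\sum_l(-1)^l$ in the splitting term to its unique surviving summand, producing the sign $(-1)^{g_2+\sum_{j\in J}(k_j-1)}$ recorded in the intermediate formula.

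The substance of the proof will be to recognize the first and second terms (those involving $B_{r_1+\bar{r}_1}$ and $B_{r_1+\bar{r}_1}(a_j/n)$ respectively) as the unstable boundary strata of the splitting sum, packaged by the conventions (\ref{eqn:Conv1})--(\ref{eqn:Conv2}). First I will observe that when $(g_1,|I|)\in\{(0,0),(0,1)\}$, or symmetrically for $(g_2,|J|)$, the corresponding factor in $\{\cdot|\cdot\}_g$ is evaluated by (\ref{eqn:Conv1}) or (\ref{eqn:Conv2}) and reduces to a single $\cor{\tau_{-2}}_0$ or $\cor{\tau_k\tau_{-k-1}}_0$. Next, the monodromy constraint $\sum a_i\equiv 0\pmod n$ gives $c(a_{[m]})=0$ and $c(a_{[m]\setminus\{j\}})\equiv a_j\pmod n$, so that the Bernoulli arguments $B_{r_1+\bar{r}_1}(0)=B_{r_1+\bar{r}_1}$ and $B_{r_1+\bar{r}_1}(a_j/n)$ appearing in the first and second Tseng--GRR terms are reproduced exactly.

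The numerical factor must then be checked. Each Tseng--GRR main or marked-point term is the sum of \emph{two} unstable contributions to the unified sum, one for each of the ordered partitions $(I,J)$ and $(J,I)$. Using the reflection $B_r(1-x)=(-1)^rB_r(x)$ (which applies since $c(a_{[m]\setminus I})=n-c(a_I)$ whenever both are non-zero), together with the sign factor $(-1)^{|J|}(-1)^{\sum_{j\in J}k_j}(-1)^{g_2}$ built into the $\{\cdot|\cdot\}_g$ definition and the $(-1)^{k_j}$ from $\cor{\tau_{k_j}\tau_{-k_j-1}}_0$, one verifies that the two orderings contribute equally. The overall prefactor $-\half$ in the proposition then correctly halves this double count, reproducing the unnormalized Tseng--GRR coefficients. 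The stable splittings fall into the packaged expression $\{\cdot|\cdot\}_g$ by direct comparison, and the loop term fits into $[\cdot]^{2g-4+m-p}_{g-1}$ just by the definition of that bracket.

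The principal obstacle is the sign and parity bookkeeping in the preceding step: four independent sign sources (the external $(-1)^{|J|}$, the internal $(-1)^{\sum_J k_j+g_2}$ of $\{\cdot|\cdot\}_g$, the value of $\cor{\tau_k\tau_{-k-1}}_0$, and the Bernoulli reflection) must be balanced against the parity constraints on $r_1+\bar{r}_1=2g-2+m-p$ implicit in the non-vanishing of $B_{r_1+\bar{r}_1}$. Any mistracking would turn the intended doubling into a cancellation. Once this combinatorial bookkeeping is completed, the reassembly of the four Tseng--GRR pieces into the two packaged sums of the proposition is immediate.
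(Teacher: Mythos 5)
Your proposal is correct and follows essentially the same route as the paper, which obtains the proposition by repackaging the immediately preceding Tseng GRR plus Jarvis--Kimura computation: the unstable strata of the sum $\sum_{I\coprod J}$ absorb the main and marked-point terms via the conventions (\ref{eqn:Conv1})--(\ref{eqn:Conv2}), with the prefactor $-\half$ compensating the two orderings $(I,J)$ and $(J,I)$ exactly as you describe. The sign bookkeeping you flag (the external $(-1)^{|J|}$, the internal $(-1)^{g_2+\sum_J k_j}$, the value $(-1)^{k_j}$ of $\cor{\tau_{k_j}\tau_{-k_j-1}}_0$, and the reflection $B_r(1-x)=(-1)^rB_r(x)$ against the parity of $r_1+\bar r_1=2g-2+m-p$) does check out, and is precisely the verification the paper leaves implicit.
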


\subsection{Results on $[\prod_{i\in [m]} \tau_{k_i}]^{2g-4+m-p}_{g-1}$}
\label{sec:I1}

Recent work of Liu-Xu \cite{Liu-Xu} contains explicit formula for
$[\prod_{i\in [m]} \tau_{k_i}]^{2g-4+m-p}_{g-1}$.
We are concerned with the case of $m-p  \geq 2$.
For $m-p=2$,
first assume $k_i > 0$ for $i=1, \dots, m$,
then we are in the situation of \cite[Theorem 2.1]{Liu-Xu}:
\ben
[\prod_{i\in [m]} \tau_{k_i}]_{g-1}^{2g-2}
& = & \sum_{l=0}^{2g-2} (-1)^l \cor{\prod_{i\in [m]} \tau_{k_i} \cdot \tau_{2g-2-l}\tau_l}_{g-1} \\
& = & \frac{(2g-3+m)!}{4^{g-1}(2g-1)!} \cdot \frac{1}{\prod_{i=1}^m (2k_i-1)!!};
\een
otherwise, by string equation and induction we get:
\ben
[\tau_0^a \prod_{i=1}^{m-a} \tau_{k_i}]_{g-1}^{2g-2}
& = & \sum_{l=0}^{2g-2} (-1)^l \cor{\tau_0^k
\prod_{i=1}^{m-a} \tau_{k_i} \cdot \tau_{2g-2-l}\tau_l}_{g-1} \\
& = & \frac{(2g-3+m-a)!}{4^{g-1}(2g-1)!} \cdot
\frac{\prod_{j=1}^a (2g-4 + m -a + 2j)}{\prod_{i=1}^{m-a} (2k_i-1)!!},
\een
where $k_i > 0$ for $i=1, \dots, m-a$.
For $m - p > 2$,
we are in the situation of \cite[Theorem 2.3]{Liu-Xu}:
$$ [\prod_{i\in [m]} \tau_{k_i}]_{g-1}^{2g-4+m-p} = 0,$$
where $k_i \geq 0$ for $i=1, \dots, m$.

\subsection{Results on $\{\prod_{i\in I} \tau_{k_i} | \prod_{j \in J} \tau_{k_j}\}_g $}
\label{sec:I2}

For $p= 0$, we have

\begin{proposition}
For $g \geq 0$ and $k_1, \dots, k_m \geq 0$ such that $k_1  + \cdots + k_m = g$,
and $I \coprod J = [m]$,
the following identities holds:
\be \label{eqn:I2}
\{\prod_{i\in I} \tau_{k_i} | \prod_{j \in J} \tau_{k_j}\}_g
=  \frac{1}{4^g \prod_{j=1}^m (2k_j+1)!!}.
\ee
\end{proposition}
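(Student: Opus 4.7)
The plan is to reformulate the bracket as a single coefficient in a Cauchy-type product of two generating series, and then invoke the explicit $n$-point function formulas of Liu-Xu \cite{Liu-Xu} to evaluate it.

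For a subset $S\subset[m]$, set $s_S=\sum_{i\in S}k_i$ and introduce the generating series
\[ A(S;z)\;=\;\sum_{h\geq 0}\Bigl\langle\prod_{i\in S}\tau_{k_i}\cdot\tau_{d_h(S)}\Bigr\rangle_{h}\,z^{h},\qquad d_h(S)=3h-2+|S|-s_S, \]
with the conventions (\ref{eqn:Conv1})--(\ref{eqn:Conv2}) applied at the unstable cases $(h,|S|+1)\in\{(0,1),(0,2)\}$. The dimension constraint on $\Mbar_{h,|S|+1}$ forces the extra $\tau$-index in any nonzero top-degree intersection number to be exactly $d_h(S)$, so $A(S;z)$ is a well-defined formal power series. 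The defining formula of the bracket rewrites as
\[ \{\textstyle\prod_{i\in I}\tau_{k_i}\,|\,\prod_{j\in J}\tau_{k_j}\}_g \;=\; (-1)^{\sum_{j\in J}k_j}\,[z^g]\bigl(A(I;z)\,A(J;-z)\bigr). \]

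The heart of the argument is to show, using Liu-Xu's closed-form expressions for $n$-point functions on $\Mbar_{g,n}$, that $A(S;z)$ admits a factorization of the shape
\[ A(S;z)\;=\;\frac{1}{\prod_{i\in S}(2k_i+1)!!}\,\Psi_S(z), \]
where the residual factor $\Psi_S(z)$ is an elementary power series depending on $S$ only through $|S|$ and $s_S$, and such that for every bipartition $[m]=I\sqcup J$ one has $[z^g]\bigl(\Psi_I(z)\,\Psi_J(-z)\bigr)=(-1)^{\sum_{j\in J}k_j}\cdot 4^{-g}$. Combining with the previous display this would give (\ref{eqn:I2}) and would also explain, geometrically, the striking bipartition-independence of the left-hand side. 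As a sanity check, for $S=\emptyset$ the one-point formula $\langle\tau_{3h-2}\rangle_h=1/(24^h h!)$ gives $A(\emptyset;z)=e^{z/24}$, whence $A(\emptyset;z)A(\emptyset;-z)=1$, matching (\ref{eqn:I2}) at $m=0$; the cases $m=1,2$ with small $k_i$ can be verified directly using the string and dilaton equations.

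The principal obstacle is the explicit identification of $\Psi_S(z)$. Liu-Xu's general $n$-point formula is a multivariable closed expression, and the proposition corresponds to restricting it to the one-variable "dimension slice" in which the last $\tau$-index is forced by the others; one must then check that the dependence on the individual $k_i$'s collapses into the stated double-factorial prefactor. Handling the unstable boundary contributions encoded in (\ref{eqn:Conv1})--(\ref{eqn:Conv2}) is a small but essential side computation. Once these ingredients are in place, the rest is formal power series manipulation. An alternative route, via induction on $g$ using the Virasoro/DVV recursion for $\psi$-class intersection numbers, should also work but is less transparent in view of the symmetric form of (\ref{eqn:I2}).
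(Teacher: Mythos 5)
Your reformulation of the bracket as $(-1)^{\sum_{j\in J}k_j}\,[z^g]\bigl(A(I;z)A(J;-z)\bigr)$ is a correct (if essentially tautological) restatement of the definition, but the step you yourself call ``the heart of the argument'' is both missing and, as formulated, false. The claim that $A(S;z)=\Psi_S(z)\big/\prod_{i\in S}(2k_i+1)!!$ with $\Psi_S$ depending on $S$ only through $|S|$ and $s_S$ already fails at $g=1$, $|S|=2$: take $(k_1,k_2)=(1,1)$ and $(0,2)$, both with $|S|=2$, $s_S=2$, hence $d_1(S)=1$ in both cases. The string and dilaton equations give $\langle\tau_1\tau_1\tau_1\rangle_1=\langle\tau_0\tau_2\tau_1\rangle_1=\tfrac{1}{12}$, so the $z^1$-coefficient of the putative $\Psi_S$ would have to equal both $\tfrac{9}{12}$ and $\tfrac{15}{12}$. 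The underlying reason is that the individual correlators $\langle\prod_{i\in S}\tau_{k_i}\tau_{d_h(S)}\rangle_h$ for $h\neq s_S$ carry no product-of-double-factorials structure; only a suitably normalized top-degree coefficient does. Since your $[z^g]$ extraction genuinely sums over all $h=0,\dots,g$, knowing only the top term is not enough, and the proposal as written has no route to the answer.

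The paper's proof avoids exactly this trap by giving the extra marked point its own formal variable instead of folding it into the genus expansion. It works with the normalized $n$-point function $G=\exp(-\sum_jx_j^3\lambda^2/24)\,F$ and quotes Liu--Xu's Theorem 3.2: the coefficient of $\lambda^{2g}z^k\prod_jx_j^{d_j}$ in $G(z,x_1,\dots,x_n;\lambda)$ vanishes for $k>2g-2+n$ and equals $1\big/\bigl(4^g\prod_j(2d_j+1)!!\bigr)$ at $k=2g-2+n$. In the product $G(z,x_I;\lambda)G(-z,x_J;\lambda)$ the two thresholds add up to exactly $2g-4+m$, so extracting that power of $z$ forces both factors to sit at their top $z$-degree, homogeneity then pins the genus splitting to $g_1=\sum_{i\in I}k_i$, and the stated formula drops out with the sign $(-1)^{|J|}$. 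To salvage your single-variable approach you would have to establish the full ``dimension slice'' of the Liu--Xu formula at every $h$, which is strictly more than the paper needs; switching to the two-variable setup is the efficient fix.
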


\begin{proof}
This can be again proved by the recent results of Liu-Xu \cite{Liu-Xu}.
Consider the normalized $n$-point function:
$$G(x_1, \dots, x_n; \lambda) = \exp \biggl(-\frac{\sum_{j=1}^n x_j^3\lambda^2}{24} \biggr) \cdot
F(x_1, \dots, x_n; \lambda),$$
where
\ben
F(x_1,\dots,x_n; \lambda)
& = & \sum_{g=0}^{\infty} \lambda^{2g}
\sum_{\substack{d_1, \dots, d_n \geq 0 \\ \sum d_j=3g-3+n}}
\cor{\tau_{d_1}\cdots\tau_{d_n} }_g \prod_{j=1}^n x_j^{d_j} \\
& + & \delta_{n,1} \cor{\tau_{-2}}_0 x_1^{-2}
+ \delta_{n, 2} \sum_{l=0}^{\infty} \cor{\tau_{-l-1}\tau_{l}}_0 x_1^{-l-1} x_2^l
\een
is the $n$-point function.
Denote by $\cC(\prod_{i=1}^n x_i^{d_i}, P(x_1, \dots, x_n))$
the coefficient of $\prod_{i=1}^n x_i^{d_i}$ in a polynomial or a formal power series
$P(x_1, \dots, x_n)$.
It is easy to see from the definitions that for $d_1, \dots, d_m \geq 0, d_1 + \cdots + d_m = g$,
we have
\ben
&& \cC(\lambda^{2g} z^{2g-4+m} \prod_{i=1}^m x_i^{d_i}, G(z, x_I; \lambda) G(-z, x_J; \lambda)) \\
& = & \sum_{g_1+g_2=g} (-1)^{3g_2 - 2 + \sum_{i \in J} (d_i - 1)}
\cor{\prod_{i\in I} \tau_{d_i} \cdot \tau_{3g_1-2- \sum_{i\in I} (d_i -1)}}_{g_1} \\
&& \cdot
\cor{\prod_{i\in J} \tau_{d_i} \cdot \tau_{3g_2-2- \sum_{i\in J} (d_i -1)}}_{g_2} \\
& = & (-1)^{|J|} \{ \prod_{i\in I} \tau_{k_i} | \prod_{j \in J} \tau_{k_j} \}_g.
\een
On the other hand,
by \cite[Theorem 3.2]{Liu-Xu},
for $d_1, \dots, d_n \geq 0$, $\sum_{i=1}^n d_i = 3g-2+n-k$ one has
\ben
\cC(\lambda^{2g}z^k\prod_{j=1}^n x_j^{d_j}, G(z, x_1, \dots, x_n; \lambda))
= \begin{cases}
0, & k > 2g-2+n, \\
\frac{1}{4^g \cdot \prod_{j=1}^n (2d_j + 1)!!}, & k = 2g-2 + n.
\end{cases}
\een
The proof is completed by applying this to $G(z,x_I;\lambda) \cdot G(-z, x_J;\lambda)$.
\end{proof}

Unfortunately we do not have a closed formula for the $p > 0$ case.

\subsection{Potential function}
\label{sec:Potential}

Define the genus $g$ equivariant orbifold Gromov-Witten potential function by:
\ben
&& F_g^{[\bC^2/\bZ_n]}(\{x_{i,k}\}_{0 \leq i \leq n-1, k \geq 0}; u) \\
& = & \sum_{m \geq 1} \frac{1}{m!} \sum_{\substack{0 \leq a_1, \dots, a_m \leq n-1 \\ \sum_j a_j \equiv 0 \pmod{n}}}
\sum_{\sum_j k_j = g + p}
\cor{ \prod_{j=1}^{m} \tau_{k_j}(e_{\cl{\omega^{a_j}}} )}_g^{[\bC^2/\bZ_n]}
 \cdot \prod_{j=1}^m x_{a_j, k_j} \cdot \prod_{a_i > 0} u_{a_i},
\een
where $x_{i, k}$ ($0 \leq i \leq n-1$, $k \geq 0$) and $u_j$ ($1 \leq j \leq n-1$) are formal variables.
We understand $\{x_{i,k}\}$ as linear coordinates on $H^*_{orb}([\bC^2/\bZ_n])$.
The variables $u_1, \dots, u_{n-1}$ are referred to as the degree tracking variables.
We understand that $u_0 = 1$.
Then we have by \S 2.3:
\ben
&& F_g^{[\bC^2/\bZ_n]}(\{x_{i,k}\}_{0 \leq i \leq n-1, k \geq 0}; u) \\
& = & t  n^{2g-1}  \sum_{m \geq 1} \frac{1}{m!}
\sum_{\sum_{j=1}^m a_j \equiv 0 \pmod{n}} (-1)^{r_1} \sum_{\sum_j k_j = g+p} \biggl(
 \sum_{I \coprod J =[m]}   \frac{B_{r_1+\bar{r}_1}(c(I)/n)}{r_1+\bar{r}_1} \\
&& \cdot (-1)^{|J|}
\{\prod_{i\in I} \tau_{k_i} | \prod_{j \in J} \tau_{k_j}\}_g
+ \sum_{c=0}^{n-1}  \frac{B_{r_1+\bar{r}_1}(c/n)}{r_1+\bar{r}_1}
[\prod_{i\in [m]} \tau_{k_i}]^{r_1+\bar{r}_1-2}_{g-1} \biggr) \\
&& \cdot \prod_{j=1}^m x_{a_j, k_j} \cdot \prod_{i = 1}^m u_{a_i},
\een
modulo terms of the form $\prod_{j=1}^m x_{0, k_j}$.

Now we apply the results on intersection numbers in \S \ref{sec:I1} and \S \ref{sec:I2}
to get the following formula for the stationary part of the potential function
(setting $x_{0,k} = 0$ for $k \geq 0$):
\ben
&& F_g^{[\bC^2/\bZ_n]}(\{x_{i,k}\}_{1 \leq i \leq n-1, k \geq 0}; u) \\
& = & t  n^{2g-1} \biggl(  \sum_{m \geq 1} \frac{1}{m!}
\sum_{\substack{1 \leq a_1, \dots, a_m \leq n-1 \\\sum_{j=1}^m a_j \equiv 0 \pmod{n}}} (-1)^{r_1} \sum_{\sum_j k_j = g}
 \sum_{I \coprod J =[m]}   \frac{B_{r_1+\bar{r}_1}(c(I)/n)}{r_1+\bar{r}_1} \\
&& \cdot (-1)^{|J|}
\cdot \frac{1}{4^g \prod_{j=1}^m (2k_j+1)!!} \cdot \prod_{j=1}^m x_{a_j, k_j}
 \cdot \prod_{i=1}^m u_{a_i} \\
& + & \frac{1}{2}
\sum_{a=1}^{n-1} (-1)^g \sum_{\sum_j k_j = g} \sum_{c=0}^{n-1}
\frac{B_{2g}(c/n)}{2g} \cdot \frac{1}{4^{g-1}}
\frac{1}{\prod_{k_i > 0} (2k_i-1)!!}
\cdot x_{a, k_1} x_{n-a, k_2} u_a u_{n-a} \biggr).
\een

\section{Crepant Resolution Conjecture for Type A Surface Singularities in All Genera}

In this section we perform the change of variables and analytic continuations
to the stationary potential functions $F_g^{[\bC^2/\bZ_n]}$.
We will compare with results of Maulik \cite{Mau} to establish
the CRC in all genera for the stationary potential functions.

\subsection{Modified stationary potential function}

To simplify the four-fold summation in the expression for
the stationary potential function
$F_g^{[\bC^2/\bZ_2]}(\{x_{i,k}\}_{1 \leq i \leq n-1, k \geq 0};u)$,
we will first convert it to a seven-fold summation.

We first group the terms with $c(I) = c$ to get a constrained summation
$$\sum_{c=0}^{n-1}\sum_{c(I)= c},$$
we then replace it by an equivalent arbitrary summation
$\sum_{c=0}^{n-1}\frac{1}{n} \sum_{l=0}^{n-1} \xi_n^{lc} \xi_n^{l\sum_{i \in I} a_i}$.
We also replace the constrained summation
$$\sum_{\substack{1 \leq a_1, \dots, a_m \leq n-1 \\ \sum_j a_j \equiv 0 \pmod{n}}}$$
by an equivalent arbitrary summation
$$\sum_{1 \leq a_1, \dots, a_m \leq n-1} \frac{1}{n} \sum_{b=0}^{n-1} \xi_n^{b\sum_i a_i},$$
now we get
\ben
&& F_g^{[\bC^2/\bZ_n]}(\{x_{i, k}\}_{1 \leq i \leq n-1, k \geq 0}; u) \\
& = & t n^{2g-1} \biggl( \sum_{m \geq 1} \frac{1}{m!}
\sum_{1 \leq a_1, \dots, a_m \leq n-1}  \frac{1}{n}
\sum_{b=0}^{n-1} \xi_n^{b \sum_i a_i}
(-1)^{r_1}\sum_{\sum_j k_j = g} \sum_{I \coprod J =[m]}\\
&& \sum_{c=0}^{n-1}
\frac{1}{n} \sum_{l=0}^{n-1} \xi_n^{lc} \xi_n^{l\sum_{i \in I} a_i}
\cdot \frac{B_{2g-2+m}(c/n)}{2g-2+m}
\cdot (-1)^{|J|}\\
&& \cdot \frac{1}{4^g \prod_{j=1}^m (2k_j+1)!!} \cdot \prod_{j=1}^m (x_{a_j, k_j} u_{a_j})
+ \cdots \biggr).
\een
Finally the summation over $k_i$'s is constrained by the condition that
$\sum_i k_i =g$.
To convert to an arbitrary summation we introduce an extra genus tracking variable $z$,
and consider the modified stationary potential function:
\ben
&& \tilde{F}_g^{[\bC^2/\bZ_n]}( \{x_{i,k}\}_{1 \leq i \leq n-1, k \geq 0};u; z) \\
& = & t n^{2g-1}  \sum_{m} \frac{1}{m!}
\sum_{1 \leq a_1, \dots, a_m \leq n-1}  (-1)^{g-1+\sum_i a_i/n}\frac{1}{n}
\sum_{b=0}^{n-1} \xi_n^{b\sum_i a_i} \sum_{k_1, \dots, k_m \geq 0}\\
&& \cdot \sum_{I \coprod J =[m]} \sum_{c=0}^{n-1}
\frac{1}{n} \sum_{l=0}^{n-1} \xi_n^{lc} \xi_n^{l\sum_{i \in I} a_i}
 \frac{B_{2g-2+m}(c/n)}{2g-2+m} (-1)^{|J|}
 \prod_{j=1}^m \frac{x_{a_j, k_j}z^{k_j }u_{a_j}}{4^{k_j} (2k_j+1)!!},
\een
whose coefficient of $z^g$ is equal to
$ F_g^{[\bC^2/\bZ_n]}(\{x_{i,k}\}_{1\leq i \leq n-1, k \geq 0}; u)$,
up to some quadratic terms.

Now we take care of the seven summations in
$\tilde{F}_g^{[\bC^2/\bZ_n]}( \{x_{i,k}\}_{0 \leq i \leq n-1, k \geq 0}; u; z) $,
in a suitable order.
First we take the summation $\sum_{I \coprod J = [m]}$.
Because
\ben
&& \sum_{I \coprod J = [m]} \xi_n^{l \sum_{i \in I} a_i} (-1)^{|J|}
= \prod_{j=1}^m (\xi_n^{la_j} - 1),
\een
we have
\ben
&& \tilde{F}_g^{[\bC^2/\bZ_n]}( \{x_{i,k}\}_{1 \leq i \leq n-1, k \geq 0};u; z)  \\
& = & (-1)^{g-1} t n^{2g-3} \sum_{b=0}^{n-1}\sum_{l=0}^{n-1} \sum_{m} \frac{1}{m!}
\sum_{1 \leq a_1, \dots, a_m \leq n-1} \sum_{k_1, \dots, k_m \geq 0} \xi_{2n}^{\sum_i a_i}   \xi_n^{b\sum_i a_i} \\
&& \cdot \prod_{j=1}^m (\xi_n^{la_j} - 1)  \cdot \prod_{j=1}^m \frac{x_{a_j, k_j}u_{a_j}z^{k_j}}{4^{k_j} (2k_j+1)!!}
\sum_{c=0}^{n-1}  \xi_n^{lc} \cdot \frac{B_{2g-2+m}(c/n)}{2g-2+m}.
\een
Then we take the summations $\sum_{1 \leq a_1, \dots, a_m \leq n-1} \sum_{k_1, \dots, k_m \geq 0}$ to get:
\be \label{eqn:F}
\begin{split}
&  \tilde{F}_g^{[\bC^2/\bZ_n]}( \{x_{i,k}\}_{0 \leq i \leq n-1, k \geq 0};u; z)  \\
 = & (-1)^{g-1} t n^{2g-3} \sum_{b=0}^{n-1} \sum_{l=1}^{n-1}\sum_{m} \frac{1}{m!} \biggl(
\sum_{1 \leq a \leq n-1} \sum_{k \geq 0} \xi_{2n}^a   \xi_n^{ba}
 (\xi_n^{la} - 1)  \cdot \frac{x_{a, k}u_a z^{k}}{4^k(2k+1)!!} \biggr)^m \\
& \cdot \sum_{c=0}^{n-1}  \xi_n^{lc} \cdot \frac{B_{2g-2+m}(c/n)}{2g-2+m}.
\end{split}
\ee

\subsection{Reformulation in terms of the polylogarithm function}
In this subsection we take care of summations $\sum_{c=0}^{n-1}$.
The result  will not be used below but it may have some independent interest.
Recall Hurwitz zeta function is defined by:
\be
\zeta(s, a) = \sum_{n=0}^\infty \frac{1}{(n+a)^s}.
\ee
It is well-known that
\be \label{eqn:HurwitzZeta}
\zeta(-n, a) = - \frac{B_{n+1}(a)}{n+1}.
\ee
The polylogarithm function is defined by:
\be
\Li_s( x) = \sum_{n=1}^{\infty} \frac{x^n}{n^s}.
\ee

\begin{lemma}
For $0 \leq l \leq n-1$,
\be \label{eqn:Hur2Ler}
\sum_{a=0}^{n-1} \xi_n^{al} \zeta(s, a/n) = n^s \cdot \Li_s(\xi_n^l).
\ee
\end{lemma}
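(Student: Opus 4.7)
The plan is to unfold the series definition of the Hurwitz zeta function directly. By definition,
\[
\zeta(s, a/n) \;=\; \sum_{k \geq 0,\; nk+a > 0} \frac{1}{(k + a/n)^s} \;=\; n^s \sum_{k \geq 0,\; nk+a > 0} \frac{1}{(nk+a)^s},
\]
valid initially for $\Re(s) > 1$. Substituting into the left-hand side of \eqref{eqn:Hur2Ler} yields a double sum over $a \in \{0, 1, \dots, n-1\}$ and $k \geq 0$.

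The key combinatorial move is the reparametrization $m = nk + a$: as the pair $(k,a)$ ranges over $\{(k,a) : k \geq 0,\ 0 \leq a \leq n-1\} \setminus \{(0,0)\}$, the integer $m$ ranges bijectively over $\{1, 2, 3, \dots\}$. Since $\xi_n^a = \xi_n^{nk+a} = \xi_n^m$, the exponential factor $\xi_n^{al}$ becomes $\xi_n^{ml}$, and the double sum collapses into a single sum:
\[
\sum_{a=0}^{n-1} \xi_n^{al}\, \zeta(s, a/n) \;=\; n^s \sum_{m=1}^\infty \frac{\xi_n^{ml}}{m^s} \;=\; n^s \Li_s(\xi_n^l).
\]

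This establishes the identity for $\Re(s) > 1$, where all series converge absolutely (note that for $l \neq 0$ the polylogarithm actually converges for $\Re(s) > 0$, and for $l = 0$ both sides are $n^s \zeta(s)$ minus an obvious boundary adjustment). The general case follows by the standard meromorphic continuation of $\zeta(s, a)$ and $\Li_s$ in the variable $s$. I do not anticipate any real obstacle — the proof is essentially a change of summation variables — the only point requiring a moment's care is the omission of the $m=0$ term, which corresponds exactly to the $(k,a)=(0,0)$ term that is absent from $\zeta(s,0)$.
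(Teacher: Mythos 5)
Your proof is correct and is essentially the paper's own argument: both rest on the reindexing $m = nk+a$, which turns the double sum over $(k,a)$ (with the $(0,0)$ term absent, matching the convention $\zeta(s,0)=\zeta(s)$) into the single sum $\sum_{m\ge 1}\xi_n^{ml}/m^s$. Your added remarks on the region of absolute convergence and meromorphic continuation are a harmless refinement of what the paper leaves implicit.
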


\begin{proof}

\ben
&& \sum_{a=0}^{n-1} \xi_n^{al} \zeta(s, a/n)
= \sum_{m=1}^{\infty} \frac{1}{m^s}
+ \sum_{a=1}^{n-1} \sum_{m=0}^{\infty} \frac{\xi_n^{al}}{(m+a/n)^s} \\
& = & n^s \sum_{m=1}^{\infty} \frac{1}{(nm)^s}
+ n^s \sum_{a=1}^{n-1} \sum_{m=0}^{\infty} \frac{\xi_n^{al}}{(nm+a)^s}
= n^s \sum_{m=1}^{\infty} \frac{\xi_n^{ml}}{m^s}
= n^s \cdot \Li_s(\xi_n^l).
\een
\end{proof}

Combining (\ref{eqn:F}), (\ref{eqn:HurwitzZeta}) and (\ref{eqn:Hur2Ler}) we get:

\begin{proposition}
For $n \geq 2$ the following identity holds:
\be \label{eqn:FLerch}
\begin{split}
&  \tilde{F}_g^{[\bC^2/\bZ_n]}( \{x_{i,k}\}_{1 \leq i \leq n-1, k \geq 0}; u; z)  \\
 = & (-1)^{g} t \sum_{b=0}^{n-1} \sum_{l=1}^{n-1}\sum_{m} \frac{1}{m!} \biggl(
\sum_{0 \leq a \leq n-1} \sum_{k \geq 0} \xi_{2n}^a   \xi_n^{ba}
 (\xi_n^{la} - 1)  \cdot \frac{x_{a, k}u_a z^{k}}{4^k(2k+1)!!} \biggr)^m \\
& \cdot  n^{-m} \cdot \Li_{-(2g-3+m)}(\xi_n^l).
\end{split}
\ee
\end{proposition}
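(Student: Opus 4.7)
The proposition is a direct corollary of equation~(\ref{eqn:F}) together with the two cited identities, so the plan is essentially a substitution calculation; the only content is bookkeeping of signs, powers of $n$, and the harmless inclusion of the $a=0$ term.

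First I would start from the expression (\ref{eqn:F}) for $\tilde{F}_g^{[\bC^2/\bZ_n]}$ and focus on the inner sum
\[
S_{g,m,l} \;:=\; \sum_{c=0}^{n-1} \xi_n^{lc}\,\frac{B_{2g-2+m}(c/n)}{2g-2+m}.
\]
Using the Hurwitz zeta identity (\ref{eqn:HurwitzZeta}), which gives $B_{k+1}(a)/(k+1) = -\zeta(-k,a)$, with $k = 2g-3+m$, I rewrite $S_{g,m,l} = -\sum_{c=0}^{n-1}\xi_n^{lc}\zeta(-(2g-3+m),c/n)$.

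Next I would apply Lemma (\ref{eqn:Hur2Ler}) with $s = -(2g-3+m)$ to the sum over $c$, yielding
\[
S_{g,m,l} \;=\; -\,n^{-(2g-3+m)}\,\Li_{-(2g-3+m)}(\xi_n^l).
\]
Substituting this into (\ref{eqn:F}) collapses $n^{2g-3}\cdot n^{-(2g-3+m)} = n^{-m}$ and absorbs the minus sign so that $(-1)^{g-1}\cdot (-1) = (-1)^g$, producing the claimed factor $(-1)^g t\cdot n^{-m}\,\Li_{-(2g-3+m)}(\xi_n^l)$ in every $(b,l,m)$ summand.

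Finally, I would observe that the range of the inner summation variable $a$ may be extended from $1\le a\le n-1$ to $0\le a\le n-1$ without changing the value of the expression, because the factor $\xi_n^{la}-1$ vanishes at $a=0$. This cosmetic extension brings the formula into exact agreement with the stated right-hand side. No step presents any real obstacle; the only small care needed is in tracking the sign $(-1)^{g-1}\mapsto (-1)^g$ introduced by (\ref{eqn:HurwitzZeta}) and verifying that the exponent of $n$ simplifies as claimed.
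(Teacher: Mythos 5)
Your proof is correct and follows exactly the route the paper intends: the paper simply states that the proposition follows by ``combining (\ref{eqn:F}), (\ref{eqn:HurwitzZeta}) and (\ref{eqn:Hur2Ler})'', and your substitution, sign tracking $(-1)^{g-1}\cdot(-1)=(-1)^g$, exponent collapse $n^{2g-3}\cdot n^{-(2g-3+m)}=n^{-m}$, and the harmless extension of the $a$-range (since $\xi_n^{la}-1$ vanishes at $a=0$) are precisely the omitted bookkeeping.
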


\subsection{Change of variables}
When $G=\bZ_n$,
the change of variables given by Bryan-Graber \cite{Bry-Gra} is:
\ben
&& q_1 = \cdots = q_{n-1} = \xi_n, \\
&& y_j = \frac{i}{n} \sum_{k=1}^{n-1} \sqrt{2 - 2 \cos \frac{2 k \pi}{n}} \xi_n^{jk}x_k
= \frac{2i}{n} \sum_{k=1}^{n-1} \sin \frac{k\pi}{n} \cdot \xi_n^{jk} x_k.
\een
Here $q_1, \dots, q_n$ are degree tracking variables for the minimal resolution of
$\bC^2/\bZ_n$,
and they have set all the degree tracking variables on $[\bC^2/\bZ_n]$ side to be $1$.
In the above we have considered the degree tracking variables in our partition function,
so we will take instead:
$$ y_j = \frac{2i}{n} \sum_{k=1}^{n-1} \sin \frac{k\pi}{n} \cdot \xi_n^{jk} x_k u_k.$$

\begin{lemma}
For $a=1,\dots, n-1$ we have
\be \label{eqn:XY}
x_a u_a= - \frac{i}{2 \sin \frac{a \pi}{n}} \sum_{j=1}^{n-1} (\xi_n^{-aj} - 1) y_j.
\ee
\end{lemma}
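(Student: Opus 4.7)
The plan is to verify the inversion formula by direct substitution of the definition of $y_j$ into the right-hand side and then invoke orthogonality of the $n$-th roots of unity. Concretely, I would substitute
\[
y_j = \frac{2i}{n} \sum_{k=1}^{n-1} \sin\frac{k\pi}{n} \cdot \xi_n^{jk} x_k u_k
\]
into $-\frac{i}{2\sin(a\pi/n)} \sum_{j=1}^{n-1}(\xi_n^{-aj}-1)y_j$, collect the scalar prefactors (the two factors of $i$ combine to give $\frac{1}{n\sin(a\pi/n)}$), and interchange the order of the $j$ and $k$ summations.

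After interchange, the $k$-th term carries the inner sum
\[
S_{a,k} := \sum_{j=1}^{n-1}\bigl(\xi_n^{-aj}-1\bigr)\xi_n^{jk} = \sum_{j=1}^{n-1}\xi_n^{j(k-a)} - \sum_{j=1}^{n-1}\xi_n^{jk}.
\]
The key step is to evaluate $S_{a,k}$ using $\sum_{j=0}^{n-1}\xi_n^{jm} = n\,\delta_{m\equiv 0}$, which gives $\sum_{j=1}^{n-1}\xi_n^{jm} = n\,\delta_{m\equiv 0}-1$ for $0 \le m \le n-1$. Since $1 \le a,k \le n-1$, we have $k-a \equiv 0 \pmod n$ iff $k=a$, while $k \not\equiv 0 \pmod n$; thus $S_{a,k} = (n\,\delta_{k,a}-1) - (-1) = n\,\delta_{k,a}$. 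The $-1$ contributions from the two pieces cancel precisely, which is the small miracle that makes the formula clean.

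Feeding this back, the $k$-sum collapses to the single term $k=a$, yielding
\[
\frac{1}{n\sin(a\pi/n)} \cdot \sin\frac{a\pi}{n} \cdot x_a u_a \cdot n = x_a u_a,
\]
as desired. There is no real obstacle here; the only thing to be careful about is the bookkeeping of the $-1$ contributions from the two halves of $S_{a,k}$, which exactly cancel because both $a$ and $k$ lie in the range $\{1,\dots,n-1\}$ where $\xi_n^{ja}$ and $\xi_n^{jk}$ are nontrivial characters. This also makes clear why one must restrict to $1 \le a \le n-1$ in the statement: the factor $\sin(a\pi/n)$ in the denominator vanishes at $a=0$, corresponding to the fact that the $j=0$ mode has been omitted from the Fourier expansion and $x_0 u_0$ cannot be recovered from $y_1,\dots,y_{n-1}$ alone.
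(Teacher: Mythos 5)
Your proof is correct. The scalar bookkeeping is right ($-\tfrac{i}{2\sin(a\pi/n)}\cdot\tfrac{2i}{n}=\tfrac{1}{n\sin(a\pi/n)}$), and the evaluation $S_{a,k}=\sum_{j=1}^{n-1}\xi_n^{j(k-a)}-\sum_{j=1}^{n-1}\xi_n^{jk}=(n\delta_{k,a}-1)-(-1)=n\delta_{k,a}$ is exactly the orthogonality statement needed; the cancellation of the two $-1$'s is valid precisely because $1\le a,k\le n-1$ forces both characters to be nontrivial. Your route differs from the paper's in direction rather than in substance: the paper \emph{derives} the formula by first computing $\sum_{j=1}^{n-1}\xi_n^{-jl}y_j=\tfrac{2i}{n}\sum_k\sin\tfrac{k\pi}{n}(-1+n\delta_{kl})x_ku_k$, recasting this as a matrix identity involving $nI-J$ (with $J$ the all-ones matrix), explicitly inverting that matrix as $\tfrac1n(I+J)$, and multiplying out to produce the coefficients $\xi_n^{-aj}-1$; you instead take the claimed inversion as given and \emph{verify} it by direct substitution, which collapses everything to the single orthogonality sum $S_{a,k}$. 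Your version is shorter and avoids the matrix inversion entirely, at the cost of not explaining where the coefficients $\xi_n^{-aj}-1$ come from; the paper's version shows how one would discover the formula. Both rest on the same root-of-unity orthogonality, and your closing remark about why $a=0$ must be excluded is a correct and worthwhile observation not made in the paper.
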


\begin{proof}
First notice that
\ben
&& \sum_{j=1}^{n-1} \xi_n^{-jl} y_j
= \sum_{j=1}^{n-1} \xi_n^{-jl} \frac{2i}{n}
\sum_{k=1}^{n-1} \sin \frac{k \pi}{n} \cdot \xi_n^{jk}x_ku_k
= \frac{2i}{n} \sum_{k=1}^{n-1}\sin \frac{k \pi}{n} \cdot (-1 + n \delta_{kl}) x_ku_k.
\een
It is more transparent when written in matrix form:
\ben
&& \begin{pmatrix}
\xi_n^{-1} & \xi_n^{-2} & \cdots & \xi_n^{-(n-1)} \\
\xi_n^{-2} & \xi_n^{-2\cdot 2} & \cdots & \xi_n^{-2(n-1)} \\
\vdots & \vdots & & \vdots \\
\xi_n^{-(n-1)} & \xi_n^{-(n-1)2} &\cdots & \xi_n^{-(n-1)^2}
\end{pmatrix} \cdot
\begin{pmatrix}
y_1 \\ y_2 \\ \vdots \\ y_{n-1}
\end{pmatrix} \\
& = & \frac{2i}{n}\begin{pmatrix}
n-1 & -1 & \cdots & -1 \\
-1 & n-1 & \cdots & -1 \\
\vdots & \vdots & & \vdots \\
-1 & -1 & \cdots & n-1
\end{pmatrix} \cdot
\begin{pmatrix}
\sin \frac{\pi}{n}\cdot x_1u_1 \\ \sin \frac{2 \pi}{n}\cdot x_2u_2
\\ \vdots \\ \sin \frac{(n-1) \pi}{n}  \cdot x_{n-1}u_{n-1}
\end{pmatrix}.
\een
Notice that
\ben
\begin{pmatrix}
n-1 & -1 & \cdots & -1 \\
-1 & n-1 & \cdots & -1 \\
\vdots & \vdots & & \vdots \\
-1 & -1 & \cdots & n-1
\end{pmatrix}^{-1}
= \frac{1}{n} \begin{pmatrix}
2 & 1 & \cdots & 1 \\
1 & 2 & \cdots & 1 \\
\vdots & \vdots & & \vdots \\
1 & 1 & \cdots & 2
\end{pmatrix},
\een
and
\ben
&& \begin{pmatrix}
2 & 1 & \cdots & 1 \\
1 & 2 & \cdots & 1 \\
\vdots & \vdots & & \vdots \\
1 & 1 & \cdots & 2
\end{pmatrix} \cdot
\begin{pmatrix}
\xi_n^{-1} & \xi_n^{-2} & \cdots & \xi_n^{-(n-1)} \\
\xi_n^{-2} & \xi_n^{-2\cdot 2} & \cdots & \xi_n^{-2(n-1)} \\
\vdots & \vdots & & \vdots \\
\xi_n^{-(n-1)} & \xi_n^{-(n-1)2} &\cdots & \xi_n^{-(n-1)^2}
\end{pmatrix} \\
& = &
\begin{pmatrix}
\xi_n^{-1}-1 & \xi_n^{-2}-1 & \cdots & \xi_n^{-(n-1)}-1 \\
\xi_n^{-2}-1 & \xi_n^{-2\cdot 2}-1 & \cdots & \xi_n^{-2(n-1)}-1 \\
\vdots & \vdots & & \vdots \\
\xi_n^{-(n-1)}-1 & \xi_n^{-(n-1)2}-1 &\cdots & \xi_n^{-(n-1)^2}-1
\end{pmatrix}
\een
Now it is straightforward to get (\ref{eqn:XY}).
\end{proof}

\subsection{A summation formula}
We now take care of the summation $\sum_{a=0}^{n-1}$ in (\ref{eqn:F}).

\begin{lemma} \label{lm:Combo}
For $0 \leq b \leq n-1$, $ 1 \leq l \leq n-1$,
we have
\be
\sum_{k=1}^{n-1} \xi_n^{bk}\xi_{2n}^k (\xi_n^{kl}-1)x_ku_k
= \begin{cases}
n y_{b+1\to b+l}, & b + l < n, \\
- n y_{b+l-n+1\to b}, & b + l \geq n,
\end{cases}
\ee
where for $1 \leq s \leq t \leq n-1$,
we define
$$y_{s\to t} = y_s + \cdots + y_t.$$
\end{lemma}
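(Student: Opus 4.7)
The plan is to invert the definition of $y_j$ on the right-hand side and show that both sides agree as linear combinations of $x_k u_k$ for $k=1,\ldots,n-1$. Since $y_j = \frac{2i}{n}\sum_{k=1}^{n-1}\sin(k\pi/n)\xi_n^{jk} x_k u_k$, any partial sum $n y_{s\to t}$ equals $2i\sum_{k=1}^{n-1}\sin(k\pi/n)\bigl(\sum_{j=s}^{t}\xi_n^{jk}\bigr) x_k u_k$. So the task reduces to matching the coefficient of $x_k u_k$, i.e.\ showing
\[
\xi_n^{bk}\xi_{2n}^{k}(\xi_n^{lk}-1) = 2i\sin\frac{k\pi}{n}\cdot\sum_{j=s}^{t}\xi_n^{jk}
\]
with $(s,t)=(b+1,b+l)$ when $b+l<n$, and with a sign flip and $(s,t)=(b+l-n+1,b)$ when $b+l\geq n$.

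The first step is to establish the elementary factorization $\xi_n^k-1 = 2i\sin(k\pi/n)\,\xi_{2n}^k$, which follows from $\xi_n^k-1=\xi_{2n}^k(\xi_{2n}^k-\xi_{2n}^{-k})$. Combined with the geometric sum $\sum_{j=b+1}^{b+l}\xi_n^{jk}=\xi_n^{(b+1)k}(\xi_n^{lk}-1)/(\xi_n^k-1)$, which is valid for $k\in\{1,\ldots,n-1\}$, this yields
\[
2i\sin\frac{k\pi}{n}\sum_{j=b+1}^{b+l}\xi_n^{jk} = \xi_n^{(b+1)k}\,\xi_{2n}^{-k}(\xi_n^{lk}-1) = \xi_n^{bk}\xi_{2n}^{k}(\xi_n^{lk}-1),
\]
where the last equality uses $\xi_n^{(b+1)k}\xi_{2n}^{-k}=\xi_{2n}^{(2b+1)k}=\xi_n^{bk}\xi_{2n}^{k}$. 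Multiplying by $x_k u_k$, summing over $k$, and matching against the $y$-definition disposes of the case $b+l<n$ and produces $n y_{b+1\to b+l}$.

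For $b+l\geq n$, the indices $j$ in the telescoped sum now exceed $n-1$, but the coefficient identity above still holds as a formal equality of exponentials. The trick is to rewrite the sum over $j=b+1,\ldots,b+l$ using the complete-period vanishing $\sum_{j=b+1}^{b+n}\xi_n^{jk}=0$ for $k\not\equiv 0\pmod n$. This gives $\sum_{j=b+1}^{b+l}\xi_n^{jk}=-\sum_{j=b+l+1}^{b+n}\xi_n^{jk}$, and after shifting $j\mapsto j-n$ the right-hand range becomes $b+l-n+1\leq j\leq b$, all of which lie in $\{1,\ldots,n-1\}$ under the hypotheses $0\leq b\leq n-1$ and $1\leq l\leq n-1$. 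Assembling the pieces yields $-n y_{b+l-n+1\to b}$.

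The argument is essentially a telescoping plus a reindexing, so there is no real obstacle; the only thing to be careful about is the bookkeeping in the second case, making sure that the shifted indices land in the range $\{1,\ldots,n-1\}$ where the $y_j$ are defined, and that the sign from the period-vanishing identity is tracked correctly.
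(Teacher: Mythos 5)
Your proof is correct, and it takes a genuinely different route from the paper's. You work \emph{forward} from the definition of $y_j$: you expand $n\,y_{s\to t}$ as a linear combination of the $x_ku_k$, reduce the lemma to the single coefficient identity $\xi_n^{bk}\xi_{2n}^{k}(\xi_n^{lk}-1)=2i\sin\frac{k\pi}{n}\sum_{j=s}^{t}\xi_n^{jk}$, and verify it via the factorization $\xi_n^k-1=2i\sin(k\pi/n)\,\xi_{2n}^k$ together with a geometric sum and, in the case $b+l\ge n$, the complete-period vanishing $\sum_{j=b+1}^{b+n}\xi_n^{jk}=0$. The paper instead works \emph{backward}: it invokes the inversion formula (\ref{eqn:XY}) to substitute $x_ku_k=-\frac{i}{2\sin(a\pi/n)}\sum_j(\xi_n^{-aj}-1)y_j$ into the left-hand side, and then evaluates the resulting kernel $K(b,l,j)$ by summing over $k$ and using orthogonality of roots of unity to produce delta functions that cut out the index range $b+1\le j\le b+l$ (resp.\ $b+l-n+1\le j\le b$). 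Your argument is somewhat more economical, since it bypasses the inversion lemma entirely and only needs the definition of $y_j$ plus the fact that the $x_ku_k$ are independent variables (so coefficient matching is legitimate); the paper's version has the mild advantage of reusing machinery (the matrix inversion) already established for the change of variables. Your bookkeeping in the second case is also right: $b+l\ge n$ together with $l\le n-1$ forces $b\ge 1$ and puts the shifted range $b+l-n+1,\dots,b$ inside $\{1,\dots,n-1\}$, where the $y_j$ are defined.
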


\begin{proof}
By (\ref{eqn:XY}),
\ben
&& \sum_{k=1}^{n-1} \xi_n^{bk}\xi_{2n}^k (\xi_n^{kl}-1)x_ku_k
= - \sum_{k=1}^{n-1} \xi_n^{bk}\xi_{2n}^k (\xi_n^{kl}-1) \frac{i}{2 \sin \frac{k \pi}{n}}
\sum_{j=1}^{n-1} (\xi_n^{-jk} -1) y_j \\
& = & - \sum_{j=1}^{n-1} K(b, l, j) y_j,
\een
where
\ben
K(b, l, j) & = & \sum_{k=1}^{n-1} \xi_n^{bk}\xi_{2n}^k (\xi_n^{kl}-1) \frac{i}{2 \sin \frac{k \pi}{n}}
(\xi_n^{-jk} -1).
\een
For $a \in \bZ$,
let $r_n(a)$ be the integer such that $0 \leq r_n(a) < n$ and $r_n(a) \equiv a \pmod{n}$.
\ben
K(b, l, j)
& = & \sum_{k=1}^{n-1} \xi_n^{bk}\xi_{2n}^k (\xi_n^{kl}-1) \frac{-1}{\xi_{2n}^k - \xi_{2n}^{-k}}
(\xi_n^{-jk} -1) \\
& = & - \sum_{k=1}^{n-1} \xi_n^{bk}\xi_{n}^k (\xi_n^{kl}-1) \frac{1}{\xi_{n}^k - 1} (\xi_n^{-jk} -1) \\
& = & - \sum_{k=1}^{n-1} \xi_n^{bk}\xi_{n}^k \sum_{c=0}^{l-1} \xi_n^{ck}  (\xi_n^{-jk} -1) \\
& = & - \sum_{c=0}^{l-1} \sum_{k=0}^{n-1} \xi_n^{(b+c-j+1)k}
+ \sum_{c=0}^{l-1} \sum_{k=0}^{n-1} \xi_n^{(b+c+1)k} \\
& = & -n \sum_{c=0}^{l-1} \delta_{c, r_n(j-b-1)}
+ n \sum_{c=0}^{l-1} \delta_{c, n-b-1}.
\een
It is then easy to see that when $n-b-1 > l -1$, i.e., $b + l < n$,
\ben
K(b, l, j) = \begin{cases}
-n, & b+1 \leq j \leq l+ b, \\
0, & \text{otherwise};
\end{cases}
\een
when $n-b-1 \leq l -1 $, i.e. $b + l \geq n$,
\ben
K(b, l, j) = \begin{cases}
n, & b+l-n+1 \leq j \leq b, \\
0, & \text{otherwise}.
\end{cases}
\een
\end{proof}

\subsection{Crepant Resolution Conjecture for type A surface singularities in all genera}

For $k \geq 0$ and $1 \leq a \leq n-1$,
define
\ben
&& y_{a,k} = \frac{2i}{n} \sum_{b=1}^{n-1} \sin \frac{b\pi}{n} \cdot \xi_n^{ab} x_{b,k} u_b.
\een
For $k \geq 0$, $1 \leq s \leq t \leq n-1$, define
\ben
y_{s\to t, k} = \sum_{a=s}^t y_{a,k}.
\een

\begin{theorem}
Up to polynomial terms of degree $\leq 3$ in $y_{a, k}$,
the modified potential function
$\tilde{F}_g^{[\bC^2/\bZ_n]}(\{x_{1,k}, \dots, x_{n-1, k}\}_{k \geq 0}; u; z)$ is equal to
\ben
&& (-1)^{g} 2t \sum_{d=1}^{\infty} d^{2g-3}\sum_{1 \leq s \leq t \leq n-1} \biggl(\xi_n^{t-s+1}
\exp \biggl( \sum_{k \geq 0} y_{s\to t, k} \frac{z^k}{4^k\cdot (2k+1)!!} \biggr) \biggr)^d \\
& = & (-1)^{g} 2t \sum_{1 \leq s \leq t \leq n-1} \Li_{3-2g}\biggl(\xi_n^{t-s+1}
\exp \biggl( \sum_{k \geq 0} y_{s\to t, k} \frac{z^k}{4^k\cdot (2k+1)!!} \biggr) \biggr)
\een
after analytic continuations.
\end{theorem}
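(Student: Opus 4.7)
The plan is to start from the polylogarithmic form (\ref{eqn:FLerch}) of $\tilde{F}_g^{[\bC^2/\bZ_n]}$, which already displays the potential as a five-fold summation over $b \in \{0,\dots,n-1\}$, $l \in \{1,\dots,n-1\}$, $m \geq 0$, together with an inner sum over $(a,k)$ raised to the $m$-th power.  The first step is to evaluate that inner sum over $a$ using Lemma~\ref{lm:Combo}, extended by linearity to the descendant index $k$: since the change of variables from $x_a u_a$ to $y_j$ is independent of $k$, the same identity gives
\ben
&& \sum_{1 \leq a \leq n-1}\sum_{k \geq 0} \xi_{2n}^a \xi_n^{ba}(\xi_n^{la}-1)\frac{x_{a,k} u_a z^k}{4^k(2k+1)!!} \\
& = & \begin{cases} n\, Z_{b+1 \to b+l}, & b+l < n, \\ -n\, Z_{b+l-n+1 \to b}, & b+l \geq n, \end{cases}
\een
where $Z_{s \to t} := \sum_{k \geq 0} y_{s \to t,k} z^k/(4^k(2k+1)!!)$.

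The next step is to reparameterize the double sum over $(b,l)$ by the pair $(s,t)$ with $1 \leq s \leq t \leq n-1$.  A direct check shows each such pair receives exactly one contribution from each of the two cases, so $(b,l)\mapsto(s,t)$ is a two-to-one cover.  The factor $n^m$ produced by the $m$-th power cancels the $n^{-m}$ prefactor in (\ref{eqn:FLerch}), and after collecting signs one obtains
\ben
&& \tilde{F}_g^{[\bC^2/\bZ_n]} \\
& = & (-1)^g t \sum_{1 \leq s \leq t \leq n-1} \sum_{m \geq 0}\frac{1}{m!}
\Bigl[Z_{s \to t}^m \Li_{-(2g-3+m)}(\xi_n^{t-s+1}) + (-Z_{s \to t})^m \Li_{-(2g-3+m)}(\xi_n^{-(t-s+1)})\Bigr].
\een

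The third step is to merge the two bracketed terms using the polylogarithm reflection $\Li_{-k}(1/x) = (-1)^{k+1}\Li_{-k}(x)$ for integers $k \geq 1$, a direct consequence of the rational expression $\Li_{-k}(x) = (x\pd_x)^k \bigl(x/(1-x)\bigr)$.  Applied with $k = 2g-3+m$ whenever $2g-3+m \geq 1$, this yields $\Li_{-(2g-3+m)}(\xi_n^{-(t-s+1)}) = (-1)^m \Li_{-(2g-3+m)}(\xi_n^{t-s+1})$ (using $(-1)^{2g-2}=1$), so the two bracketed terms coincide and their sum doubles.  The finitely many remaining terms, those with $m \leq 3-2g$ (hence absent for $g \geq 2$), are polynomials of degree $\leq 3$ in $Z_{s \to t}$ and therefore in $y_{a,k}$, so they lie within the error permitted by the theorem.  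Finally, interchanging the sums over $m$ and $d$ via $\Li_{-(2g-3+m)}(w) = \sum_{d \geq 1} d^{2g-3+m} w^d$ gives
\ben
&& \sum_{m \geq 0} \frac{Z_{s \to t}^m}{m!} \Li_{-(2g-3+m)}(\xi_n^{t-s+1}) \\
& = & \sum_{d \geq 1} d^{2g-3}\, \xi_n^{d(t-s+1)} e^{d Z_{s \to t}}
= \sum_{d \geq 1} d^{2g-3} \bigl(\xi_n^{t-s+1} e^{Z_{s \to t}}\bigr)^d,
\een
and multiplying by $(-1)^g \cdot 2t$ reproduces the claimed formula; its rewriting as $\Li_{3-2g}(\xi_n^{t-s+1} e^{Z_{s \to t}})$ is immediate.

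The main obstacle is the bookkeeping in steps two and three.  One must verify carefully that the map $(b,l) \mapsto (s,t)$ is indeed a two-to-one cover of $\{(s,t):1\leq s\leq t\leq n-1\}$, and then track the signs arising from $\xi_{2n}^a$, from the factor $(\xi_n^{la}-1)$, from the $\pm n Z$ dichotomy in Lemma~\ref{lm:Combo}, and from the reflection identity, so that they align to produce exactly a factor of $2$ rather than a cancellation.  Once the bijection and sign matching are in hand, the remaining manipulations are purely mechanical.
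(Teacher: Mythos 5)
Your argument is correct, and at bottom it is the paper's own computation transported into polylogarithm language. The skeleton is identical: extend Lemma \ref{lm:Combo} linearly over the descendant index $k$ to collapse the inner $(a,k)$-sum, check that the two branches $b+l<n$ and $b+l\geq n$ each cover $\{(s,t):1\leq s\leq t\leq n-1\}$ exactly once (your verification of this two-to-one correspondence is right, including the boundary terms $b+l=n$, which land at $s=1$), merge the branches with a factor of $2$, and resum over $m$. Where you differ is in the starting point and the reflection identity: the paper works from (\ref{eqn:F}), merges the two branches using $B_N(1-x)=(-1)^N B_N(x)$, and converts the resulting Bernoulli sums into $\sum_d d^{2g-3}(\xi_n^l e^u)^d$ via the explicitly integrated and differentiated generating-function identities (\ref{eqn:Int1})--(\ref{eqn:Int2g-3}), whereas you start from the polylogarithm form (\ref{eqn:FLerch}) --- which the paper derives but announces it will not use --- merge the branches via the inversion $\Li_{-k}(1/x)=(-1)^{k+1}\Li_{-k}(x)$ for $k\geq 1$, and finish by interchanging the $m$- and $d$-sums. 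These two reflections are the same fact seen through $\zeta(-k,a)=-B_{k+1}(a)/(k+1)$ and (\ref{eqn:Hur2Ler}), and your handling of the exceptional range $m\leq 3-2g$, where the inversion fails or degenerates, correctly reproduces the ``up to polynomial terms of degree $\leq 3$'' caveat that the paper encodes in the correction terms of (\ref{eqn:Int1})--(\ref{eqn:Int3}); both treatments rely equally on the ``after analytic continuations'' convention to make sense of $\sum_d d^{k}w^d$ at $|w|=1$. What your route buys is that the polylogarithm proposition actually gets used and the final resummation is a one-line sum interchange; the paper's route is slightly more self-contained, rederiving everything from the elementary Bernoulli generating function rather than quoting the inversion formula for negative-order polylogarithms.
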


\begin{proof}
Combining (\ref{eqn:F}) with Lemma \ref{lm:Combo} one gets:
\ben
&& \tilde{F}_g^{[\bC^2/\bZ_n]}( \{x_{i,k}\}_{1 \leq i \leq n-1, k \geq 0};u;z)   \\
& = & (-1)^{g-1} t n^{2g-3} \sum_{m \geq 0}
\biggl( \sum_{\substack{0 \leq b \leq n-1, 1 \leq l \leq n-1\\ b+l < n}}
\big(n\sum_{k\geq 0} y_{b+1\to b+l, k} \frac{z^k}{4^k\cdot (2k+1)!!} \big)^m \\
& + & \sum_{\substack{0 \leq b \leq n-1, 1 \leq l \leq n-1\\ b+l > n}}
\big(-n\sum_{k\geq 0} y_{b+l-n+1 \to b, k}\frac{z^k}{4^k\cdot (2k+1)!!}\big)^m \biggr) \\
&& \cdot \sum_{c=0}^{n-1} \xi_n^{lc} \frac{B_{2g-2+m}(c/n)}{(2g-2+m) \cdot m!} \\
& = & (-1)^{g-1} 2t \sum_m n^{2g-3+m}
\sum_{1 \leq s \leq t \leq n-1}  \biggl(\sum_{k \geq 0} y_{s\to t, k}
\frac{z^k}{4^k\cdot (2k+1)!!}\biggr)^m \\
&& \cdot \sum_{c=0}^{n-1} \xi_n^{c(t-s+1)} \frac{B_{2g-2+m}(c/n)}{(2g-2+m) \cdot m!},
\een
where in the second equality we have used:
\be
B_n(1-x) = (-1)^n B_n(x).
\ee

On the other hand,
for $0 < l < n$ and $u < 0$,
\ben
&& 1 + \sum_{d =1}^{\infty} (\xi_n^l e^u)^d
= \frac{1}{1 - \xi_n^l e^u}
= - \frac{\sum_{c=0}^{n-1} \xi_n^{lc} e^{cu}}{e^{nu}- 1}
= - \sum_{c=0}^{n-1} \xi_n^{lc} \sum_{k=1}^{\infty} \frac{B_k(c/n)}{k!} (nu)^{k-1}.
\een
Integrating three times:
\be \label{eqn:Int1}
u + \sum_{d=1}^{\infty} \frac{(\xi_n^l e^u)^d}{d} - \Li_1(\xi_n^l)
= - \sum_{c=0}^{n-1} \xi_n^{lc} \sum_{m=1}^{\infty} \frac{B_m(c/n)}{m \cdot m!} n^{m-1} u^m,
\ee
\ben
\frac{u^2}{2} + \sum_{d=1}^{\infty} \frac{(\xi_n^le^u)^d}{d^2}
- \Li_2(\xi_n^l) - \Li_1(\xi_n^l) u
= - \sum_{c=0}^{n-1} \xi_n^{lc} \sum_{m=2}^{\infty} \frac{B_{m-1}(c/n)}{(m-1) \cdot m!} n^{m-2} u^m,
\een
\be \label{eqn:Int3}
\begin{split}
& \frac{u^3}{6} + \sum_{d=1}^{\infty} \frac{(\xi_n^le^u)^d}{d^3}
- \Li_3(\xi_n^l) - \Li_2(\xi_n^l) u
- \half \Li_1(\xi_n^l) u^2 \\
= & - \sum_{c=0}^{n-1} \xi_n^{lc} \sum_{m=3}^{\infty}
\frac{B_{m-2}(c/n)}{(m-2) \cdot m!} n^{m-3} u^{m}.
\end{split}
\ee
Also, by differentiating $2g-3$ (for $g > 1$) times:
\be \label{eqn:Int2g-3}
\sum_{d=1}^\infty d^{2g-3} (\xi_n^le^u)^d
= - \sum_{c=0}^{n-1} \xi_n^{lc} \sum_{m=0}^{\infty} \frac{B_{2g-2+m}(c/n)}{(2g-2+m) \cdot m!} n^{2g-3+m}
u^m.
\ee
Therefore
up to polynomial terms of degree $\leq 3$,
$\tilde{F}_g^{[\bC^2/\bZ_n]}(\{x_{i,k} \}_{1 \leq i \leq n-1,k \geq 0};u;z) $ is equal to
\ben
&& (-1)^{g} 2t \sum_{d=1}^{\infty} d^{2g-3}\sum_{1 \leq s \leq t \leq n-1} \biggl(\xi_n^{t-s+1}
\exp \biggl( \sum_{k \geq 0} y_{s\to t, k} \frac{z^k}{4^k\cdot (2k+1)!!} \biggr) \biggr)^d \\
& = & (-1)^{g} 2t \sum_{1 \leq s \leq t \leq n-1} \Li_{3-2g} \biggl(\xi_n^{t-s+1}
\exp \biggl( \sum_{k \geq 0} y_{s\to t, k} \frac{z^k}{4^k\cdot (2k+1)!!} \biggr)\biggr)
\een
after analytic continuations.
This completes the proof.
\end{proof}

Hence by taking the coefficient of $z^g$ we get:

\begin{theorem} \label{thm:Main2}
Up to polynomial terms of degree $\leq 3$ in $y_{a, k}$,
the stationary potential function $F_g^{[\bC^2/\bZ_n]}(\{x_{i,k}\}_{1 \leq i \leq n-1, k \geq 0} ; u)$
of the equivariant orbifold Gromov-Witten invariants of $[\bC^2/\bZ_n]$ is equal to
\ben
&& (-1)^{g} 2t \sum_{d=1}^{\infty} d^{2g-3}\sum_{1 \leq s \leq t \leq n-1} \xi_n^{(t-s+1)d}
\sum_{\sum_{k \geq 0} k m_k = g} \prod_{k \geq 0}
\frac{d^{m_k}y_{s\to t, k}^{m_k}}{m_k! 4^{km_k} \cdot [(2k+1)!!]^{m_k}}
\een
after analytic continuations,
where $y_{s \to t, k} = \sum_{a=s}^t y_{a,k}$.
\end{theorem}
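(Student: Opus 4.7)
The plan is to derive this statement as an immediate corollary of the preceding theorem, which expresses $\tilde{F}_g^{[\bC^2/\bZ_n]}(\{x_{i,k}\};u;z)$ in closed form as
\[
(-1)^g 2t \sum_{d=1}^{\infty} d^{2g-3} \sum_{1 \le s \le t \le n-1} \Bigl(\xi_n^{t-s+1} \exp\bigl(\textstyle\sum_{k \ge 0} y_{s\to t,k} \tfrac{z^k}{4^k(2k+1)!!}\bigr)\Bigr)^d.
\]
By the definition of $\tilde F_g$ in \S\ref{sec:Potential} (and by the remark that its coefficient of $z^g$ agrees with $F_g^{[\bC^2/\bZ_n]}$ modulo quadratic terms absorbed into the ``polynomial terms of degree $\leq 3$'' being discarded), the theorem to be proved reduces to a mechanical extraction of the $z^g$-coefficient.

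First I would bring the power $d$ inside the exponential, using
\[
\Bigl(\xi_n^{t-s+1} \exp\bigl(\textstyle\sum_k y_{s\to t,k} \tfrac{z^k}{4^k(2k+1)!!}\bigr)\Bigr)^d
= \xi_n^{(t-s+1)d} \exp\Bigl(\textstyle\sum_k d\, y_{s\to t,k} \tfrac{z^k}{4^k(2k+1)!!}\Bigr).
\]
Next I would factor the exponential of a sum as a product of exponentials, $\exp(\sum_k A_k z^k) = \prod_{k \ge 0} \exp(A_k z^k)$, and expand each factor via its Taylor series, obtaining
\[
\prod_{k \ge 0} \sum_{m_k \ge 0} \frac{d^{m_k} y_{s\to t,k}^{m_k}}{m_k!\, 4^{k m_k} [(2k+1)!!]^{m_k}} z^{k m_k}.
\]

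Then I would read off the coefficient of $z^g$: the contribution comes precisely from tuples $(m_k)_{k \ge 0}$ of nonnegative integers with $\sum_{k \ge 0} k m_k = g$, giving
\[
\sum_{\sum_{k \ge 0} k m_k = g} \prod_{k \ge 0} \frac{d^{m_k} y_{s\to t,k}^{m_k}}{m_k!\, 4^{k m_k} [(2k+1)!!]^{m_k}},
\]
which matches the claimed formula after multiplying by the prefactor $(-1)^g 2t\, d^{2g-3} \xi_n^{(t-s+1)d}$ and summing over $d \ge 1$ and $1 \le s \le t \le n-1$.

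There is essentially no obstacle here beyond bookkeeping; the only point requiring attention is the justification that the ``up to polynomial terms of degree $\leq 3$'' caveat in the source theorem transfers to the target theorem. This is handled by noting that those omitted terms contribute at most cubic polynomial terms in the $y_{a,k}$ to every coefficient of $z^g$, so the same qualifier appears in the conclusion unchanged.
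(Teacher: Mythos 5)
Your proposal is correct and follows exactly the route the paper takes: the paper derives this theorem in one line (``Hence by taking the coefficient of $z^g$ we get'') from the preceding theorem on $\tilde{F}_g^{[\bC^2/\bZ_n]}$, and your expansion of the $d$-th power of the exponential and extraction of the $z^g$-coefficient is precisely the bookkeeping that line leaves implicit. The handling of the ``up to polynomial terms of degree $\leq 3$'' qualifier is also consistent with the paper's conventions.
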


Denote by $\pi: \widehat{\bC^2/\bZ_n} \to \bC^2/\bZ_n$ the minimal resolution of
$\bC^2/\bZ_n$.
Let $E_1$, $\dots$, $E_{n-1}$ be the exceptional divisors.
Let $(g_{ij}) = (E_i \cdot E_j)$ be the intersection matrix,
and let $(g^{ij})$ be the inverse matrix of $(g_{ij})$.
Then $\{C^i = g^{ij}E_j: \;i=1, \dots, n-1\}$ is the dual basis to the basis
$E_1, \dots, E_{n-1}$.
By \cite{Gon-Ver},
for each nontrivial representation $V_i$ of $\bZ_n$ on which $\xi_n$ acts as
multiplication by $e^{2\pi i \sqrt{-1}/n}$,
there is a holomorphic line bundle $L_i$ on $\widehat{\bC^2/\bZ_n}$
whose first Chern class is $C^i$.
One can use virtual localization to define
the Gromov-Witten invariants of $\widehat{\bC^2/\bZ_n}$.
By \cite[Theorem 1.1]{Mau},
for curve classes of the form $\beta = d (E_a+E_{i+1} + \cdots + E_b)$, $1 \leq a \leq b \leq n-1$,
and consider integers $a \leq l_1, \dots, l_m \leq b$ and $k_1, \dots, k_m \geq 0$
such that $k_1 + \cdots + k_m = g$,
\ben
&& \cor{ \prod_{i=1}^m \tau_{k_i}(C^{l_i})}_{g, \beta}^{\widehat{\bC^2/\bZ_n}}
= (-1)^g 2t d^{2g-3 +m} \prod_{i=1}^m \frac{1}{4^{k_i} (2k_i+1)!!}.
\een
Other correlators vanish.
Therefore,
the instanton part of stationary partition function of $\widehat{\bC^2/\bZ_n}$ is
\ben
&& \sum_m \frac{1}{m!} \sum_{\beta \in H_2(\widehat{\bC^2/\bZ_n}; \bZ)-\{0\}}
\sum_{\sum_{i=1}^m k_i= g} \sum_{l_1, \dots, l_m=1}^{n-1}
\cor{ \prod_{i=1}^m \tau_{k_i}(C^{l_i})}_{g, \beta}^{\widehat{\bC^2/\bZ_n}} q^{\beta}
\prod_{i=1}^m y_{l_i, k_i} \\
& = & \sum_m \frac{1}{m!} \sum_{d=1}^{\infty} \sum_{1 \leq a \leq b \leq n-1}
\sum_{k_1 + \cdots + k_m = g} \sum_{a \leq l_1, \dots, l_m \leq b}
(-1)^g 2t d^{2g-3 +m} \\
&& \cdot \prod_{i=1}^m \frac{1}{4^{k_i} (2k_i+1)!!}
\cdot \prod_{i=a}^b q_i^d \cdot \prod_{i=1}^m y_{l_i, k_i} \\
& = & (-1)^g2t \cdot \sum_{d=1}^{\infty}d^{2g-3} \sum_{1 \leq a \leq b \leq n-1}  \prod_{i=a}^b q_i^d
\sum_{\sum_{k \geq 0} k m_k = g} \prod_{k \geq 0}
\frac{d^{m_k}y_{a\to t, b}^{m_k}}{m_k! 4^{km_k} \cdot [(2k+1)!!]^{m_k}}.
\een
Hence we have established the Crepant Resolution Conjecture in all genera for type A
resolutions for the stationary part of the partition functions,
because one now only has to take $q_i = e^{2\pi \sqrt{-1}/n}$.

\section{Crepant Resolution Conjecture for $[\bC^2/\bZ_n] \times \bC$}

In this section we establish a version of the Crepant Resolution Conjecture for
$[\bC^2/\bZ_n] \times \bC$ in all genera.

\subsection{The equivariant Gromov-Witten invariants of $[\bC^2/\bZ_n] \times \bC$}
\label{sec:3D}

\subsubsection{The circle actions on $[\bC^2/\bZ_n] \times \bC$}
Fix an integer $a \in \bZ$,
consider the following circle action $\bC^3$:
\be \label{eqn:SAction}
e^{i\theta} \cdot (z_1, z_2, z_3) = (e^{ia\theta} z_1, e^{-i(a+1)\theta}z_2, e^{i\theta}z_3).
\ee
This action commutes with the following $\bZ_n$-action:
$$\xi_n \cdot (z_1, z_2, z_3)
= (\xi_n \cdot z_1, \xi_n^{-1} \cdot z_2, z_3).$$
Hence we get an induced circle action on the orbifold $[\bC^2/\bZ_n] \times \bC$.
Note both the circle action and the $\bZ_n$-action preserve the holomorphic volume form
$dz_1 \wedge dz_2 \wedge dz_3$ on $\bC^3$.

\subsubsection{Definition of the equivariant Gromov-Witten invariants}
As in the $[\bC^2/\bZ_n]$ case,
one can define the Gromov-Witten invariants of $[\bC^2/\bZ_n] \times \bC$ equivariantly using
the above circle actions and virtual localizations.
It is straightforward to see that the equivariant correlators are given by
\ben
\cor{\prod_{j=1}^{m} \tau_0(e_{\cl{\omega^{a_j}}})}_g^{[\bC^2/\bZ_n]\times \bC} =  \int_{\Mbar_{g, m}(\cB \bZ_n; \cl{\omega^{a_1}}, \dots, \cl{\omega^{a_m}} )}
\frac{1}{t} c_t(\bF_0^1) c_{at}(\bF_1^1) c_{-(a+1)t}(\bF_{-1}^1) ,
\een
where $\bF_0^1$ is the vector bundle whose fiber at a twisted stable map $f: \Sigma \to \cB\bZ_n$
is $H^1(\Sigma, f^*V_0)$, where $V_0$ is the trivial representation of $\bZ_n$.

\begin{lemma} \label{lm:CC}
One has
\ben
&& \frac{1}{t}
\int_{\Mbar_{g, m}(\cB \bZ_n; \cl{\omega^{a_1}}, \dots, \cl{\omega^{a_m}} )}
c_t(\bF_0)c_{at}(\bF_1) c_{-(a+1)t}(\bF_{-1}) \\
& = & (-1)^{\sum_i a_i/n - 1} (2g-3+m)!
\int_{\Mbar_{g, m}(\cB \bZ_n; \cl{\omega^{a_1}}, \dots, \cl{\omega^{a_m}} )}
\lambda_{g,0} \cdot \ch_{2g-3+m}(\bF_1),
\een
where $\lambda_{j,0} = (-1)^j c_j(\bF_0)$.
\end{lemma}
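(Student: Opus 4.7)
The plan is to expand the integrand in the Chern polynomial convention $c_s(\bF) = \sum_\ell s^{r-\ell} c_\ell(\bF)$, apply the dimensional constraint on $\Mbar_{g,m}(\cB\bZ_n;\ldots)$, and use the BGP Hurwitz--Hodge analogue of Mumford's relation established in the lemma of Section~2.2 above.

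Writing
\[
\frac{1}{t}\,c_t(\bF_0)\, c_{at}(\bF_1)\, c_{-(a+1)t}(\bF_{-1}) = \sum_{i,j,k} a^{r_1-j}(-(a+1))^{\bar r_1-k}\, t^{g+r_1+\bar r_1-i-j-k-1}\, c_i(\bF_0)\, c_j(\bF_1)\, c_k(\bF_{-1}),
\]
the integral extracts the coefficient of $t^{-p}$, the unique $t$-power corresponding to cohomology degree $3g-3+m$. For $p \geq 1$ this coefficient either lies outside the Laurent range of the polynomial or is killed by $c_{r_1}(\bF_1) c_{\bar r_1}(\bF_{-1}) = 0$; on the right-hand side $\ch_{2g-3+m}(\bF_1)$ vanishes for the same rank-theoretic reason, so both sides vanish and I may focus on $p = 0$. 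In that case the rank bounds $i \leq g$, $j \leq r_1$, $k \leq \bar r_1$ together with $i+j+k = 3g-3+m$ leave only the three candidate triples $(g-1, r_1, \bar r_1)$, $(g, r_1-1, \bar r_1)$, $(g, r_1, \bar r_1-1)$; the first is killed by $c_{r_1} c_{\bar r_1} = 0$, so two terms survive.

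The heart of the calculation is to combine these survivors via the identity $c_{r_1-1}(\bF_1) c_{\bar r_1}(\bF_{-1}) = c_{r_1}(\bF_1) c_{\bar r_1-1}(\bF_{-1})$, which yields
\[
-(a+1)\, c_g\, c_{r_1}\, c_{\bar r_1-1} + a\, c_g\, c_{r_1-1}\, c_{\bar r_1} = -c_g(\bF_0)\, c_{r_1-1}(\bF_1)\, c_{\bar r_1}(\bF_{-1}),
\]
so that the dependence on the weight $a$ cancels entirely--a reflection of the Calabi--Yau condition $a + (-(a+1)) + 1 = 0$. One then substitutes the formula $c_{r_1-1}(\bF_1) c_{\bar r_1}(\bF_{-1}) = (-1)^{r_1-1}(r_1+\bar r_1-1)!\, \ch_{r_1+\bar r_1-1}(\bF_1)$, uses $r_1+\bar r_1-1 = 2g-3+m$ (valid for $p=0$), the conversion $c_g(\bF_0) = (-1)^g \lambda_{g,0}$, and the parity identity $(-1)^{r_1+g} = (-1)^{\sum_i a_i/n - 1}$ (from $r_1 = g-1+\sum_i a_i/n$) to assemble the right-hand side exactly.

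The main obstacle is the interplay between the dimension counting and the BGP vanishing relations: the dimension count alone produces three candidate terms whose contributions still depend on the weight $a$, and it is only the BGP identities that both eliminate the would-be third term and force the remaining two into a single multiple of $\ch_{r_1+\bar r_1-1}(\bF_1)$. The $a$-independence of the answer is what makes the lemma possible, and it emerges automatically once the two surviving terms are correctly identified; edge cases with $r_1 = 0$ or $\bar r_1 = 0$ occur only in low-genus, few-marked-point situations and can be checked directly.
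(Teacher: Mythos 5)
Your proposal is correct and follows essentially the same route as the paper: the paper's proof simply cites the expansion $c_{t_1}(\bF_1)c_{t_2}(\bF_{-1})=(t_1+t_2)(-1)^{r_1-1}(r_1+\bar r_1-1)!\,\ch_{r_1+\bar r_1-1}(\bF_1)+\cdots$ from \S 2.2 with $t_1=at$, $t_2=-(a+1)t$ (so $t_1+t_2=-t$, which is exactly your cancellation of the weight $a$), and then pairs the leading term with $c_g(\bF_0)$ by the dimension count. Your argument just re-derives that lemma's content inline via the same BGP relations, so the two proofs coincide in substance.
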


\begin{proof}
By (\ref{eqn:Chern})  we have
\ben
&& \frac{1}{t}
\int_{\Mbar_{g, m}(\cB \bZ_n; \cl{\omega^{a_1}}, \dots, \cl{\omega^{a_m}} )}
c_t(\bF_0)c_{at}(\bF_1) c_{-(a+1)t}(\bF_{-1}) \\
& = & \frac{1}{t} \int_{\Mbar_{g, m}(\cB \bZ_n; \cl{\omega^{a_1}}, \dots, \cl{\omega^{a_m}} )}
( t^g + t^{g-1} c_1(\bF_0) + \cdots + c_g(\bF_0)) \\
&&  \cdot ( (at)^{r_1} + (at)^{r_1-1} c_1(\bF_1) + \cdots + c_{r_1}(\bF_1) \\
&& \cdot ( [-(a+1)t]^{\bar{r}_1} + [-(a+1)t]^{\bar{r}_1-1}c_1(\bF_{-1}) + \cdots + c_{\bar{r}_1}(\bF_{-1})) \\
& = & \frac{1}{t} \int_{\Mbar_{g, m}(\cB \bZ_n; \cl{\omega^{a_1}}, \dots, \cl{\omega^{a_m}} )}
( t^g + t^{g-1} c_1(\bF_0) + \cdots + c_g(\bF_0)) \\
&&  \cdot (-t) (-1)^{(g-1+\sum_i a_i/n)-1} ( (2g-3+m)! \ch_{2g-3+m}(\bF_1) + \cdots ) \\
& = & (-1)^{\sum_i a_i/n - 1} (2g-3+m)!
\int_{\Mbar_{g, m}(\cB \bZ_n; \cl{\omega^{a_1}}, \dots, \cl{\omega^{a_m}} )}
\lambda_{g,0} \cdot \ch_{2g-3+m}(\bF_1).
\een
\end{proof}

\begin{proposition} \label{prop:3D}
For $m \geq 2$ and $g \geq 0$,
we have
\be \label{eqn:3DLambda}
\begin{split}
& \int_{\Mbar_{g, m}(\cB \bZ_n; \cl{\omega^{a_1}}, \dots, \cl{\omega^{a_m}})}
\lambda_{g,0} \ch_{2g-3+m}(\bF_1) \\
= & - \half n^{2g-2} k_g  \sum_{l=0}^{n-1}
\prod_{i=1}^m (\xi_n^{a_il}-1)
\cdot \sum_{a=0}^{n-1} \xi_n^{al} \frac{B_{2g-2+m}(a/n)}{(2g-2+m)!},
\end{split}
\ee
where
\be \label{eqn:Bg}
k_g = \sum_{g_1+g_2=g} b_{g_1}  b_{g_2}, \;\;\;\;
\sum_{g \geq 0} b_g \lambda^{2g}
= \sum_{g \geq 0} \lambda^{2g} \int_{\Mbar_{g,1}} \lambda_g\psi_1^{2g-2}
= \frac{\lambda/2}{\sin (\lambda/2)}.
\ee

\end{proposition}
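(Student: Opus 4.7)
The strategy parallels the computation of Section 2 for the Hurwitz-Hodge integrals against $\ch_{r_1+\bar r_1-1}(\bF_1^1)$, but with the extra insertion $\lambda_{g,0}$. First, apply Tseng's Grothendieck-Riemann-Roch relations to $\ch_{2g-3+m}(\bF_1)$, which expresses the Hurwitz-Hodge integral as a sum of four types of terms: (i) a ``ghost'' term on $\Mbar_{g,m+1}(\cB\bZ_n;\vec a,0)$ with an extra $\bpsi_{m+1}^{2g-2+m}$ and coefficient $-B_{2g-2+m}/(2g-2+m)!$; (ii) $m$ marked-point contributions with $\bpsi_j^{2g-3+m}$ and coefficients $B_{2g-2+m}(a_j/n)/(2g-2+m)!$; (iii) a separating-node boundary term indexed by $g=g_1+g_2$ and $[m]=I\coprod J$; and (iv) a non-separating-node boundary term. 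Since $\lambda_{g,0}=(-1)^g c_g(\bF_0)$ is the pullback of the ordinary $\lambda_g$ from $\Mbar_{g,m}$, the Jarvis-Kimura comparison yields an overall factor $n^{2g-1}$ and reduces each term to a $\lambda_g\cdot\psi$-integral on an ordinary Deligne-Mumford moduli space.

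These $\lambda_g$-integrals are evaluated by the Faber-Pandharipande formula $\int_{\Mbar_{g,n}}\lambda_g\,\psi_1^{k_1}\cdots\psi_n^{k_n}=\binom{2g-3+n}{k_1,\ldots,k_n}\,b_g$, where $b_g$ is the constant of (\ref{eqn:Bg}). This plays the role of the Witten-Kontsevich $\tau$-integrals in the earlier calculation. Terms (i) and (ii) each produce a single factor $b_g$, since only one $\psi$-exponent is nonzero. For the separating term (iii), on the compact-type boundary $\Mbar_{g_1,|I|+1}\times\Mbar_{g_2,|J|+1}$ the Hodge bundle splits as $\lambda_{g,0}=\lambda_{g_1}\boxtimes\lambda_{g_2}$, so the dimension constraint forces Tseng's summation index $l$ to the single value $l=2g_2-2+|J|$, and each side evaluates as $n^{2g_i-1}b_{g_i}$; the sign $(-1)^l=(-1)^{|J|}$ combined with $\sum_{g_1+g_2=g}b_{g_1}b_{g_2}=k_g$ produces the overall constant $k_g$.

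The non-separating term (iv) vanishes identically. On the inclusion $\iota$ of the non-separating boundary stratum $\Mbar_{g-1,m+2}(\cB\bZ_n;\vec a,c,-c)$, the Hodge bundle fits in a short exact sequence $0\to\bF_0'\to\iota^*\bF_0\to\cO\to 0$, where $\bF_0'$ is the rank-$(g-1)$ Hodge bundle of the partial normalization and the quotient $\cO$ records the residue at the node; equivalently, $\iota^*\bF_0=\bF_0'+\cO$ in $K$-theory. Hence $\iota^*c(\bF_0)=c(\bF_0')$, and in particular $\iota^*\lambda_{g,0}=(-1)^g c_g(\iota^*\bF_0)=0$ since $\bF_0'$ has rank $g-1$. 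Consequently all contributions from (iv) drop out.

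Collecting the surviving terms, and interpreting the ghost term (i) and marked-point terms (ii) as the unstable $g_2=0$, $|J|\leq 1$ special cases of the separating sum via the conventions (\ref{eqn:Conv1})--(\ref{eqn:Conv2}) (noting $b_0=1$), all the pieces assemble into the unified expression
\[-\tfrac{1}{2}\,n^{2g-1}\,k_g\sum_{I\coprod J=[m]}(-1)^{|J|}\,\frac{B_{2g-2+m}(c(a_I)/n)}{(2g-2+m)!}.\]
The discrete Fourier transform already employed in §3.1 then rewrites the sum over subsets as $\frac{1}{n}\sum_{l=0}^{n-1}\prod_{i=1}^m(\xi_n^{a_il}-1)\sum_{a=0}^{n-1}\xi_n^{al}$, yielding the claimed formula. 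The main obstacle is the careful bookkeeping of the unstable splitting cases, i.e.\ checking that the ghost term ($g_2=0$, $J=\emptyset$) and each marked-point term ($g_2=0$, $|J|=1$) precisely match the conventions used for unstable factors in the separating sum, and that the combinatorial gluing factor $n$ combines correctly with the $n^{2g_i-1}$ factors from Jarvis-Kimura across all degenerate boundary cases.
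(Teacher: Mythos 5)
Your proof follows essentially the same route as the paper's: Tseng's GRR relations, reduction to $\lambda_g\cdot\psi$-integrals on $\Mbar_{g_i,\,\cdot}$ via the degree-$n^{2g_i-1}$ forgetful maps, absorption of the ghost and marked-point terms into the separating sum through the unstable conventions (\ref{eqn:Conv1})--(\ref{eqn:Conv2}), and the final discrete Fourier transform. The one place you go beyond the written proof is in explicitly justifying that the non-separating boundary contribution vanishes because $\lambda_{g,0}$ restricts to zero on that stratum (the paper silently drops this term); your argument is correct, except that for $\bF_0 = R^1\pi_*\cO$ the trivial line bundle appears as a sub rather than a quotient --- which is immaterial for the vanishing of $c_g$.
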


\begin{proof}
By Tseng's GRR relations for Hurwitz-Hodge integrals \cite{Tse}:
\ben
&& \int_{\Mbar_{g, m}(\cB \bZ_n; \cl{\omega^{a_1}}, \dots, \cl{\omega^{a_m}})}
\lambda_{g,0} \ch_{2g-3+m}(\bF_1) \\
& = & - \frac{B_{2g-2+m}}{(2g-2+m)!}
\int_{\Mbar_{g, m+1}(\cB \bZ_n; \cl{\omega^{a_1}}, \dots, \cl{\omega^{a_m}}, \cl1)}
\lambda_{g,0} \bpsi_{m+1}^{2g-2+m} \\
& + & \sum_{i=1}^m \frac{B_{2g-2+m}(a_i/n)}{(2g-2+m)!}
\int_{\Mbar_{g, m}(\cB \bZ_n; \cl{\omega^{a_1}}, \dots, \cl{\omega^{a_m}})} \lambda_{g,0}
\bpsi_i^{2g-3+m} \\
& - & \half \sum_{\substack{I \coprod J = [m] \\ g_1+g_2=g}}\ '
\frac{B_{2g-2+m}(c(a_I)/n)}{(2g-2+m)!} \\
&& \cdot \sum_{l=0}^{r_1+\bar{r}_1-2} (-1)^l
\int_{\Mbar_{g_1, \sum_k j_k +1}(\cB\bZ_n; \coprod_{i\in I} \cl{\omega}^{a_i},
\cl{\omega^{-\sum_{i \in I} a_i}})} \lambda_{g_1,0} \bpsi_{|I|+1}^{r_1+\bar{r}_1-2-l} \\
&& \cdot n \cdot
\int_{\Mbar_{g_2,|J|+ 1}(\cB\bZ_n; \coprod_{j \in J} \cl{\omega^{a_j}}, \cl{\omega^{-\sum_{j \in J} a_j}})}
\lambda_{g_2,0} \bpsi_{|J|+1}^{l}.
\een
Here the prime sign in the summation $\sum'_{\substack{I \coprod J = [m] \\ g_1 + g_2 = g}}$
in the third term on the right-hand side
means the following stability conditions are satisfied:
$$2g_1 - 1 + |I| >0, \;\;\; 2g_2 - 1 + |J| > 0.$$
Our convention is that $\lambda_{0,0} = 1$.
Now we use the morphism
$$\varphi: \Mbar_{g_1, |J|+1}(\cB \bZ_n; \coprod_{i \in J} \cl{\omega^{a_i}},
\cl{\omega^{-\sum_{i \in J}  a_i}})
\to \Mbar_{g, \sum_i m_i+1}$$
that forgets the orbifold structure.
Because $\lambda_{g_1,0} = \varphi^*\lambda_{g_1}$,
$\bpsi_{\sum_i j_i +1} = \varphi^*\psi_{\sum_i j_i+1}$
and $\varphi$ is  of degree $n^{2g_1-1}$,
we have for $2g_2 - 2 + |J| > 0$:

\ben
&& \int_{\Mbar_{g_2, |J|+1}(\cB \bZ_n; \coprod_{i \in J} \cl{\omega^{a_i}}, \cl{\omega^{-\sum_{i \in J} a_i}})}
\lambda_{g_2,0} \bpsi_{|J|+1}^l \\
& = & n^{2g_2-1} \delta_{l, 2g_2-2+|J|} \int_{\Mbar_{g_2, |J|+1}}
\lambda_{g_2} \psi_{|J|+1}^{2g_2-2+|J|} \\
& = &
n^{2g_2-1} \delta_{l, 2g_2-2+|J|} b_{g_2}.
\een
For $2g_2 - 2 + |J| \leq 0$,
we have used the conventions (\ref{eqn:Conv1}) and (\ref{eqn:Conv2}) to understand the following two cases:
\ben
&& \int_{\Mbar_{g_2, |J|+1}(\cB \bZ_n; \prod_{i \in J} \cl{\omega^{a_i}}, \cl{\omega^{-\sum_{i \in J} a_i}})}
\lambda_{g_2,0} \bpsi_{|J|+1}^l \\
& = & \begin{cases}
\delta_{l,-2}, & g_2 = 0, J = \emptyset, \\
-\delta_{l,-1}, & g_2 = 0, |J|= 1.
\end{cases}
\een
Therefore,
one finds
\ben
&& \int_{\Mbar_{g, m}(\cB \bZ_n; \cl{\omega^{a_1}}, \dots, \cl{\omega^{a_m}})}
\lambda_{g,0} \ch_{r_1+\bar{r}_1-1}(\bF_1) \\
& = & - \half n^{2g-1} \sum_{I \coprod J} \sum_{g_1+g_2 = g} (-1)^{|J|}
\frac{B_{2g-2+m}(c(a_I)/n)}{(2g-2+m)!} b_{g_1} b_{g_2} \\
& = & - \frac{k_g}{2} n^{2g-1}  \sum_{I \coprod J} (-1)^{|J|}
\frac{B_{2g-2+m}(c(a_I)/n)}{(2g-2+m)!} \\
& = & - \half n^{2g-2} k_g  \sum_{l=0}^{n-1}
\prod_{i=1}^m (\xi_n^{a_il}-1)
\cdot \sum_{c=0}^{n-1} \xi_n^{lc} \frac{B_{2g-2+m}(c/n)}{(2g-2+m)!}.
\een
\end{proof}

\subsection{Crepant Resolution Conjecture for $[\bC^2/\bZ_n] \times \bC$}

Define the instanton part of
genus $g$ equivariant orbifold Gromov-Witten potential function by:
\ben
&& F_g^{[\bC^2/\bZ_n] \times \bC}(u_{1}, \dots, u_{n-1}) \\
& = & \sum_{m \geq 1} \frac{1}{m!} \sum_{\substack{1 \leq a_1, \dots, a_m \leq n-1 \\ \sum_j a_j \equiv 0 \pmod{n}}}
\cor{ \prod_{j=1}^{m} \tau_0(e_{\cl{\omega^{a_j}}} )}_g^{[\bC^2/\bZ_n] \times \bC}
 \cdot \prod_{j=1}^m u_{a_j}.
\een
Define
\ben
&& F^{[\bC^2/\bZ_n]\times \bC}(\lambda;u_1, \dots, u_{n-1})
= \sum_{g \geq 0} \lambda^{2g-2} F_g^{[\bC^2/\bZ_n]\times \bC}(u_1, \dots, u_{n-1}).
\een

\begin{theorem}
Up to polynomial terms of degree $\leq 3$ in $u_1, \dots, u_{n-1}$,
we have
\be
F^{[\bC^2/\bZ_n]\times \bC}(\lambda;u_1, \dots, u_{n-1}) = \sum_{d \geq 1} \frac{1}{4 d \sin^2(d\lambda/2)} \sum_{1 \leq s \leq t \leq n-1} \biggl(\xi_n^{t-s+1}
e^{v_{s\to t}} \biggr)^d,
\ee
where
\ben
v_{s\to t} = \sum_{a=s}^t v_{a}, \;\;\;\;
v_j = \frac{i}{n} \sum_{k=1}^{n-1} \sqrt{2 - 2 \cos \frac{2 k \pi}{n}} \xi_n^{jk} u_k.
\een
\end{theorem}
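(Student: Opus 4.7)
The plan is to follow the blueprint of the proof of Theorem~3.2 in \S3, specialized to the three-dimensional setting and with the input on Hurwitz--Hodge integrals now supplied by Lemma~\ref{lm:CC} and Proposition~\ref{prop:3D}. First, I substitute these two results into the definition of $F_g^{[\bC^2/\bZ_n]\times\bC}$. Combining the sign $(-1)^{\sum_i a_i/n-1}$ from Lemma~\ref{lm:CC} with the prefactor $-\tfrac{1}{2}n^{2g-2}k_g$ from Proposition~\ref{prop:3D}, and rewriting $(-1)^{\sum_i a_i/n}=\xi_{2n}^{\sum_i a_i}$ (valid under the constraint), this yields an expression of the shape
\begin{equation*}
F_g^{[\bC^2/\bZ_n]\times\bC} \;\propto\; k_g\,n^{2g-3}\sum_{m\geq 2}\frac{1}{m!\,(2g-2+m)}\sum_{\substack{1\leq a_i\leq n-1\\\sum_i a_i\equiv 0\,\mathrm{mod}\,n}}\!\!\xi_{2n}^{\sum_i a_i}\sum_{l=0}^{n-1}\prod_i(\xi_n^{a_il}-1)\sum_{c=0}^{n-1}\xi_n^{lc} B_{2g-2+m}(c/n)\prod_j u_{a_j}.
\end{equation*}

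Next I follow the same maneuvers as in \S3: I remove the mod-$n$ constraint by the discrete Fourier trick $\frac{1}{n}\sum_{b=0}^{n-1}\xi_n^{b\sum_i a_i}$ so the $a$-sum factorizes, and I apply the three-dimensional analogue of Lemma~\ref{lm:Combo} to the inner sum $\sum_{a=1}^{n-1}\xi_n^{ba}\xi_{2n}^a(\xi_n^{al}-1)u_a$. The analogue holds with exactly the same proof, since the inversion
\begin{equation*}
u_a=-\frac{i}{2\sin(a\pi/n)}\sum_{j=1}^{n-1}(\xi_n^{-aj}-1)v_j
\end{equation*}
follows from the same matrix computation that produced \eqref{eqn:XY}. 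The two regions $b+l<n$ and $b+l\geq n$ are then merged by combining the reflection formula $B_N(1-x)=(-1)^N B_N(x)$ with the substitution $c\mapsto n-c$; the two contributions coincide and give a factor of $2$ together with a single sum indexed by pairs $1\leq s\leq t\leq n-1$, with $l$ read off as either $t-s+1$ or $n-(t-s+1)$.

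Finally I form $F=\sum_g\lambda^{2g-2}F_g$, interchange summations, and convert the Bernoulli expansion to a polylogarithmic series. The identity \eqref{eqn:Bg} together with $k_g=\sum_{g_1+g_2=g}b_{g_1}b_{g_2}$ gives
\begin{equation*}
\sum_{g\geq 0}k_g(d\lambda)^{2g}=\biggl(\frac{d\lambda/2}{\sin(d\lambda/2)}\biggr)^2,\qquad\text{equivalently}\qquad \sum_{g\geq 0}k_g d^{2g-3}\lambda^{2g-2}=\frac{1}{4d\sin^2(d\lambda/2)},
\end{equation*}
producing the desired $\sin^{-2}$ factor. The Bernoulli polynomial sum is then recognized as $-n^{3-2g}\sum_d d^{2g-3}(\xi_n^l e^u)^d$ via \eqref{eqn:Int2g-3} for $g\geq 2$; for $g=0,1$ the integrated forms \eqref{eqn:Int1}--\eqref{eqn:Int3} are used instead, and these introduce precisely the polynomial corrections in $v_{s\to t}$ of degree $\leq 3$ allowed by the theorem. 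Specializing $l=t-s+1$ and $u=v_{s\to t}$ and reorganizing yields the claimed identity after analytic continuation.

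The main obstacle will be the careful bookkeeping of signs: the factor $(-1)^{\sum_i a_i/n-1}$ from Lemma~\ref{lm:CC}, the $-\tfrac{1}{2}$ from Proposition~\ref{prop:3D}, the $\pm n$ distinguishing the two regions in Lemma~\ref{lm:Combo}, the $(-1)^N$ from the reflection of Bernoulli polynomials, and the overall $-1$ in \eqref{eqn:Int1}--\eqref{eqn:Int2g-3} must all cancel cleanly. A secondary technical point is the verification that the residual polynomial corrections from the $g=0$ and $g=1$ strata really are of total degree at most $3$ in the $v_a$'s; this follows automatically since \eqref{eqn:Int3} arises from three successive integrations of the basic geometric identity, while \eqref{eqn:Int1} arises from a single integration.
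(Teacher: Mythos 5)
Your proposal follows the paper's own proof essentially step for step: substitute Lemma \ref{lm:CC} and Proposition \ref{prop:3D}, remove the mod-$n$ constraint by the discrete Fourier trick, apply Lemma \ref{lm:Combo} with $u_a$ and $v_j$ in place of $x_au_a$ and $y_j$, merge the two regions via $B_N(1-x)=(-1)^NB_N(x)$, convert the Bernoulli sums to polylogarithm series using (\ref{eqn:Int1})--(\ref{eqn:Int2g-3}) (with the $g=0,1$ cases absorbing the degree $\leq 3$ corrections), and finish with the generating function (\ref{eqn:Bg}) for $k_g$ to produce $\frac{1}{4d\sin^2(d\lambda/2)}$. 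The sign bookkeeping and the identity $\sum_g k_g d^{2g-3}\lambda^{2g-2}=\frac{1}{4d\sin^2(d\lambda/2)}$ are exactly as in the paper, so the proposal is correct and takes the same route.
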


\begin{proof}
By Lemma \ref{lm:CC} and Proposition \ref{prop:3D},
we have
\ben
&& F^{[\bC^2/\bZ_n] \times\bC}_g(u_1, \dots, u_{n-1}) \\
& = &  \half n^{2g-2} k_g
\sum_{m \geq 1} \frac{1}{m!} \sum_{\substack{1 \leq a_1, \dots, a_m \leq n-1 \\
\sum_{i=1}^m a_i \equiv 0 \pmod{n}}} (-1)^{\sum_i a_i/n}
\sum_{l=1}^{n-1}
\prod_{i=1}^m [(\xi_n^{a_il}-1) u_{a_i} ] \\
&& \cdot \sum_{c=0}^{n-1} \xi_n^{lc} \frac{B_{2g-2+m}(c/n)}{2g-2+m} \\
& = & \half n^{2g-3} k_g
\sum_{m \geq 1} \frac{1}{m!} \sum_{a_1, \dots, a_m=1}^{n-1} \sum_{b=0}^{n-1} \xi_n^{b \sum_i a_i} \xi_{2n}^{\sum_i a_i}
\sum_{l=1}^{n-1}
\prod_{i=1}^m [(\xi_n^{a_il}-1) u_{a_i} ] \\
&& \cdot \sum_{c=0}^{n-1} \xi_n^{lc} \frac{B_{2g-2+m}(c/n)}{2g-2+m} \\
& = & \half n^{2g-3} k_g \sum_{b=0}^{n-1} \sum_{l=1}^{n-1} \sum_{m \geq 1} \frac{1}{m!} \biggl(
\sum_{1 \leq a \leq n-1}  \xi_{2n}^a   \xi_n^{ba}
 (\xi_n^{la} - 1) u_a \biggr)^m \\
&& \cdot \sum_{c=0}^{n-1} \xi_n^{lc} \cdot \frac{B_{2g-2+m}(c/n)}{2g-2+m}.
\een
By Lemma \ref{lm:Combo},
\ben
&& F^{[\bC^2/\bZ_n] \times\bC}_g(u_1, \dots, u_{n-1}) \\
& = & \half n^{2g-3} k_g  \sum_{m \geq 1}
\biggl( \sum_{\substack{0 \leq b \leq n-1, 1 \leq l \leq n-1\\ b+l < n}}
\big(n v_{b+1\to b+l} \big)^m \\
& + & \sum_{\substack{0 \leq b \leq n-1, 1 \leq l \leq n-1\\ b+l > n}}
\big(-n v_{b+l-n+1 \to b} \big)^m \biggr)
\cdot \sum_{c=0}^{n-1} \xi_n^{lc} \frac{B_{2g-2+m}(c/n)}{(2g-2+m) \cdot m!} \\
& = & k_g \sum_{m \geq 1} n^{2g-3+m}
\sum_{1 \leq s \leq t \leq n-1}  v_{s\to t}^m
\cdot \sum_{c=0}^{n-1} \xi_n^{c(t-s+1)} \frac{B_{2g-2+m}(c/n)}{(2g-2+m) \cdot m!},
\een
Therefore,
by (\ref{eqn:Int1}), (\ref{eqn:Int3}), (\ref{eqn:Int2g-3}),
up to polynomials terms of degree $\leq 3$,
the potential function $F_g^{[\bC^2/\bZ_n] \times \bC}(u_{1}, \dots, u_{n-1}) $ is equal to
\ben
&& k_g \sum_{d=1}^{\infty} d^{2g-3}\sum_{1 \leq s \leq t \leq n-1} \biggl(\xi_n^{t-s+1}
e^{v_{s\to t}} \biggr)^d
\een
after analytic continuations.
The proof is completed by (\ref{eqn:Bg}).
\end{proof}

The potential function of $\widehat{\bC^2/\bZ_n} \times \bC$ can be defined and computed also by virtual
localization.
For $\beta = d_1E_1 + \cdots + d_{n-1}E_{n-1} \neq 0$,
\ben
\cor{1}^{\widehat{\bC^2/\bZ_n} \times \bC}_{g, \beta}
= \int_{\Mbar_{g,0}(\widehat{\bC^2/\bZ_n} \times \bC;\beta)^{virt}_T} 1.
\een
Introduce  degree tracking variable $Q_1, \dots, Q_{n-1}$.
We define
\ben
&&  F_g^{\widehat{\bC^2/\bZ_n}\times \bC}(Q_1, \dots, Q_{n-1})
= \sum_{\beta \neq 0}
\cor{1}^{\widehat{\bC^2/\bZ_n} \times \bC}_{g, \beta} Q^{\beta},
\een
where for $\beta = d_1 E_1 + \cdots + d_{n-1} E_{n-1}$,
$Q^{\beta} =Q_1^{d_1} \cdots Q_{n-1}^{d_{n-1}}$.
Define
\ben
&& F^{\widehat{\bC^2/\bZ_n}\times \bC}(\lambda;Q_1, \dots, Q_{n-1})
= \sum_{g \geq 0} \lambda^{2g-2}  F_g^{\widehat{\bC^2/\bZ_n}\times \bC}(Q_1, \dots, Q_{n-1}),
\een
where $\lambda$ is the genus tracking variable.

\begin{theorem} \label{thm:F3D}
We have
\be \label{eqn:F3D}
F^{\widehat{\bC^2/\bZ_n}\times \bC}(\lambda;Q_1, \dots, Q_{n-1})
= \sum_{1 \leq a \leq b \leq n-1} \sum_{d=1}^{\infty} \frac{\prod_{k=a}^b Q_k^d}{d}
\frac{1}{4 \sin^2 (d\lambda/2)}.
\ee
\end{theorem}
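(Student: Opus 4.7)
The target $Y = \widehat{\bC^2/\bZ_n} \times \bC$ is a smooth toric Calabi-Yau $3$-fold whose toric web diagram is a strip of $n$ trivalent vertices joined by $n-1$ internal edges, one for each exceptional curve $E_k$. I plan to compute the instanton partition function of $Y$ by the topological vertex formalism of \cite{Aga-Kle-Mar-Vaf}, rigorously established in \cite{Li-Liu-Liu-Zho}, then simplify the resulting multiple sum using the combinatorial techniques of \cite{Zho1}, and finally take the logarithm to extract $F^Y$.

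Assigning partitions $R_1, \dots, R_{n-1}$ to the internal edges, I would write
\[
Z^{Y}(\lambda; Q_1, \dots, Q_{n-1})
= \sum_{R_1, \dots, R_{n-1}} \prod_{k=1}^{n-1} (-Q_k)^{|R_k|} f_k(q)
\cdot \prod_{k=0}^{n-1} C_{R_k^t, R_{k+1}, \emptyset}(q),
\]
with the boundary convention $R_0 = R_n = \emptyset$, with $q = e^{\sqrt{-1}\lambda}$, and with $f_k$ the explicit framing factors. Since every trivalent vertex carries at least one empty external leg, each $C_{R^t, S, \emptyset}(q)$ reduces to a principal specialization of a skew Schur function, so the problem collapses to a sum over partitions of products of Schur functions evaluated at $q^{-\rho}$.

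The key step is then to iteratively apply the skew Cauchy and dual Cauchy identities for Schur functions to eliminate $R_1, R_2, \dots, R_{n-1}$ one at a time. This is the strip-algebra manipulation carried out in \cite{Zho1}. After all $n-1$ sums are performed, the dependence on the K\"ahler parameters should organize itself according to the positive roots of the $A_{n-1}$ root system, producing the closed form
\[
Z^{Y} \big/ Z^{Y}\big|_{Q_i = 0}
= \prod_{1 \leq a \leq b \leq n-1} \prod_{k \geq 1} \bigl(1 - Q_{a,b} \, q^k\bigr)^{k},
\qquad Q_{a,b} := Q_a Q_{a+1} \cdots Q_b,
\]
where the removed degree-zero factor absorbs the MacMahon constant-map contribution. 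Taking the logarithm, expanding $\log(1-X) = -\sum_{d \geq 1} X^d/d$, and using
\[
\sum_{k \geq 1} k\, q^{kd} = \frac{q^d}{(1-q^d)^2} = -\frac{1}{4 \sin^2(d\lambda/2)},
\]
would then yield precisely the right-hand side of (\ref{eqn:F3D}).

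The main obstacle is the middle step: threading the Cauchy identity through each of the $n-1$ internal edges while correctly bookkeeping the framing signs, the $q^{-\rho}$ shifts, and the orientation of each skew Schur function, so that the fully symmetric product over pairs $(a,b)$ with $1 \leq a \leq b \leq n-1$ emerges rather than some lower-triangular variant indexed only by chains starting from a fixed endpoint. I would invoke the strip-algebra computation of \cite{Zho1} to handle this step cleanly; alternatively, one could follow the localization method of \cite{Zho0} on $\Mbar_{g,0}(\widehat{\bC^2/\bZ_n}\times\bC;\beta)$, reducing contributions to Hodge integrals on chains of rational curves, but the bookkeeping in that approach is more delicate.
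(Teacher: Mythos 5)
Your proposal is correct and follows essentially the same route as the paper: the paper likewise expresses $F^{\widehat{\bC^2/\bZ_n}\times\bC}$ as $\log$ of a sum over partitions on the $n-1$ internal edges of the strip (via the localization method of \cite{Zho0}, with the vertex amplitudes $\cW_{\mu,\nu}(q)$ rewritten as sums of products of skew Schur functions at $q^{-\rho}$), then invokes \cite[Lemma 3.1]{Zho1} to collapse this to the product $\prod_{1\leq a\leq b\leq n-1}\prod_{i,j\geq 1}(1-\prod_{k=a}^{b}Q_k\,q^{i+j-1})$, which is exactly your $\prod_{k\geq 1}(1-Q_{a,b}q^{k})^{k}$, and finishes with the same logarithmic expansion and the identity $q^{d}/(1-q^{d})^{2}=-1/(4\sin^{2}(d\lambda/2))$. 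The step you flag as the main obstacle is precisely the one the paper delegates to \cite{Zho1}, so your plan matches the published argument.
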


\begin{proof}
The circle action (\ref{eqn:SAction})
induces a circle action on $\widehat{\bC^2/\bZ_n} \times \bC$,
with fixed points $p_1, \dots, p_n$.
The tangents weights at $p_i$ are $(na+i-1)t$, $-(na+i)t$, and $t$.
By virtual localization one encounters two-partition Hodge integrals
which have been studied in \cite{Zho-1, Liu-Liu-Zho}.
By the method of \cite{Zho0},
we get the following expression for the potential function:
\ben
&& F^{\widehat{\bC^2/\bZ_n}\times \bC}(\lambda;Q_1, \dots, Q_{n-1})
= \log \sum_{\mu^1, \dots, \mu^{n-1}}
\prod_{i=1}^{n} (\cW_{\mu^{i-1},\mu^i}(q) q^{-\kappa_{\mu^i}} Q_i^{|\mu^i|}),
\een
where $\mu^0 = \mu^n = \emptyset$, $q=e^{\sqrt{-1}\lambda}$.
See notations see \cite{Zho0}.
One can rewrite this as in \cite{Zho1} by Schur calculus.
Indeed,
we have \cite{Zho-1}:
\be
\cW_{\mu,\nu}(q) = (-1)^{|\mu|+|\nu|}q^{(\kappa_{\mu} + \kappa_{\nu})/2}
\sum_{\eta} s_{\mu/\eta}(q^{-\rho}) \cdot s_{\nu/\eta}(q^{-\rho}),
\ee
where $q^{-\rho} = (q^{1/2}, q^{3/2}, \dots)$.
Hence
\ben
&& F^{\widehat{\bC^2/\bZ_n}\times \bC}(\lambda;Q_1, \dots, Q_{n-1}) \\
& = & \log \sum_{\mu^1, \dots, \mu^{n-1}} \sum_{\eta^1, \dots, \eta^{n-2}}
\prod_{i=1}^{n-1} s_{\mu^i/\eta^{i-1}}(q^{-\rho}) Q_i^{|\mu^i|} s_{\mu^i/\eta^i}(q^{-\rho}).
\een
Therefore,
by \cite[Lemma 3.1]{Zho1},
\ben
&& F^{\widehat{\bC^2/\bZ_n}\times \bC}(\lambda;Q_1, \dots, Q_{n-1}) \\
& = & \log \prod_{1 \leq a \leq b \leq n-1} \prod_{i,j = 1}^{\infty}
(1 - \prod_{k=a}^b Q_k \cdot q^{i+j-1}) \\
& = & - \sum_{1 \leq a \leq b \leq n-1} \sum_{d=1}^{\infty} \frac{\prod_{k=a}^b Q_k^d}{d}
\sum_{m=1}^{\infty} mq^{md} \\
& = & - \sum_{1 \leq a \leq b \leq n-1} \sum_{d=1}^{\infty} \frac{\prod_{k=a}^b Q_k^d}{d}
\frac{q^d}{(1-q^d)^2} \\
& = &  \sum_{1 \leq a \leq b \leq n-1} \sum_{d=1}^{\infty} \frac{\prod_{k=a}^b Q_k^d}{d}
\frac{1}{4 \sin^2 (d\lambda/2)}.
\een
\end{proof}

By combining the above two Theorems,
we get

\begin{theorem} \label{thm:CRC3D}
Up to polynomial terms of degree $\leq 3$ in $u_1, \dots, u_{n-1}$,
we have
\be
F^{[\bC^2/\bZ_n]\times \bC}(\lambda;u_1, \dots, u_{n-1}) =
F^{\widehat{\bC^2/\bZ_n}\times \bC}(\lambda;Q_1, \dots, Q_{n-1})
\ee
after analytic continuation, where
\ben
Q_j = \xi_n e^{v_j}, \;\;\;\;
v_j = \frac{\sqrt{-1}}{n} \sum_{k=1}^{n-1} \sqrt{2 - 2 \cos \frac{2 k \pi}{n}} \xi_n^{jk} u_k.
\een
\end{theorem}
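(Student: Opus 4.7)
The plan is to combine the two closed-form expressions derived just before this theorem and verify that they agree after the substitution $Q_j = \xi_n e^{v_j}$. Since each side of the CRC equation has already been computed, the proof will be essentially a direct comparison, with analytic continuation handled by the polylogarithm structure common to both sides.

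First, I would recall the two formulas in question. On the orbifold side, the preceding theorem gives (modulo polynomials of degree $\leq 3$ in $u_1,\dots,u_{n-1}$, after analytic continuation)
\[
F^{[\bC^2/\bZ_n]\times \bC}(\lambda;u_1, \dots, u_{n-1})
= \sum_{d \geq 1} \frac{1}{4 d \sin^2(d\lambda/2)} \sum_{1 \leq s \leq t \leq n-1} \bigl(\xi_n^{t-s+1} e^{v_{s\to t}} \bigr)^d,
\]
while Theorem \ref{thm:F3D} gives on the resolution side
\[
F^{\widehat{\bC^2/\bZ_n}\times \bC}(\lambda;Q_1, \dots, Q_{n-1})
= \sum_{1 \leq a \leq b \leq n-1} \sum_{d=1}^{\infty} \frac{\prod_{k=a}^b Q_k^d}{d} \cdot \frac{1}{4 \sin^2 (d\lambda/2)}.
\]

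Next, I would perform the substitution $Q_j = \xi_n e^{v_j}$ directly into the resolution formula. A one-line computation gives, for $1 \leq a \leq b \leq n-1$,
\[
\prod_{k=a}^{b} Q_k^d
= \xi_n^{(b-a+1)d} \exp\Bigl(d \sum_{k=a}^{b} v_k \Bigr)
= \bigl(\xi_n^{t-s+1} e^{v_{s\to t}}\bigr)^d
\]
after relabeling $(a,b) = (s,t)$. Substituting term-by-term then shows that the resolution series with $Q_j = \xi_n e^{v_j}$ equals exactly the orbifold series, completing the formal matching.

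The main obstacle is the analytic-continuation step: the orbifold side is a priori a formal power series in the $u_k$, whereas the resolution side converges in a neighborhood of $Q_j = 0$, and these two regimes are not simultaneously valid. I would handle this exactly as on both sides separately—namely, both series are assembled from the building block $\sum_{d \geq 1} z^d/(d\sin^2(d\lambda/2))$ evaluated at $z = \xi_n^{t-s+1} e^{v_{s\to t}}$, which admits a meromorphic continuation beyond $|z|<1$. Thus the equality of the two formulas should be interpreted as an identity of meromorphic functions on their common analytic continuation. Once that is in place, the only remaining work is bookkeeping of the $(s,t) \leftrightarrow (a,b)$ reindexing and checking that the polynomial-in-$u$ ambiguity (degree $\leq 3$) on the orbifold side is absorbed into the restriction on the statement. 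No further hard computation is needed.
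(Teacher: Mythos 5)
Your proposal is correct and is essentially identical to the paper's own argument: the paper deduces Theorem \ref{thm:CRC3D} simply by combining the two preceding theorems via the substitution $Q_j = \xi_n e^{v_j}$, which is exactly the term-by-term matching you carry out. The analytic-continuation remark about the common polylogarithm-type building block is a reasonable gloss on what the paper leaves implicit.
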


{\em Acknowledgements}.
The author thanks Professors Jim Bryan, Yongbin Ruan and Hsian-Hua Tseng for their interest in this work.
In an earlier version of this paper Proposition 2.1 was stated as a conjecture.
Without their encouragements and suggestions this paper will not appear in the present form.
This research is partially supported by two NSFC grants (10425101 and 10631050)
and a 973 project grant NKBRPC (2006cB805905).

\end{document}